\spnewtheorem{algorithm}{Algorithm}{\bf}{\rm}
\begin{document}

\title{Vine copulas as a mean for the construction of high dimensional probability
distribution associated to a Markov Network
}

\titlerunning{Vine copulas and Markov networks}        

\author{Edith Kov\'{a}cs         \and
        Tam\'{a}s Sz\'{a}ntai 
}

\authorrunning{E. Kov\'{a}cs and T. Sz\'{a}ntai} 

\institute{Department of Mathematics, \'{A}VF College of Management of Budapest \at
              Vill\'{a}nyi \'{u}t 11-13, H-1114 Budapest, Hungary
           \and  
           Institute of Mathematics, Budapest University of Technology and Economics,
            \at M\H{u}egyetem rkp. 3, H-1111 Budapest, Hungary \\
              Tel.: +36-1-463-1298\\
              Fax: +36-1-463-1291\\
              \email{szantai@math.bme.hu}                    
}

\date{Received: date / Accepted: date}

\maketitle

\begin{abstract}
Building higher-dimensional copulas is generally recognized as a difficult problem. Regular-vines using bivariate copulas provide a flexible class of high-dimensional dependency models. In large dimensions, the drawback of the model is the exponentially increasing complexity. Recognizing some of the conditional independences is a possibility for reducing the number of levels of the pair-copula decomposition, and hence to simplify its construction (see \cite{Aasetal09}). The idea of using conditional independences was already performed under elliptical copula assumptions \cite{HanKurCoo06}, \cite{KurCoo02} and in the case of DAGs in a recent work \cite{Bau11}.

	We provide a method which uses some of the conditional independences encoded by the Markov network underlying the variables. We give a theorem which under some graph conditions makes possible to derive pair-copula decomposition of the probability density function associated to a Markov network. 	
	
	As the underlying Markov network is usually unknown, we first have to discover it from the sample data. Using our results published in \cite{SzaKov08} and \cite{KoSza10a} we will show how to derive a multidimensional copula model exploiting the information on conditional independences hidden in the sample data.
\keywords{Copula decomposition \and $t$-cherry junction tree \and Markov network 
\and Cherry-wine probability distribution \and Graphical models}
\end{abstract}

\section{Introduction}
\label{intro}
Copulas in general are known to be useful tool for modeling
multivariate probability distributions since they serve as a link between univariate marginals.
Pair-copula construction introduced by H. Joe \cite{Joe97} is able to encode more
types of dependencies in the same time since they can be expressed as a product of different types of bivariate copulas. For solving the problem wich occurs when
we want to find consistent marginal copulas involved in the expression of
a junction tree copula density (see \cite{KoSza10a}) we found extremly useful the
concept of Regular-vine copulas. Our research in this direction was also motivated by the arising open questions in the papers published in this field, as follows.

The paper \cite{Aasetal09} calls the attention on the fact that ''conditional independence may reduce the number of the
pair-copula decompositions and hence simplify the construction''. 
In this paper the importance of choosing a good
factorisation which takes advantage from the conditional independence
relations between the random variables is pointed out. 
In the present paper we give a method for
findig that pair copula construction which exploits the conditional
independences between the variables of a given Markov network. We also give a
method for constructing Regular-vines starting from a multivariate data set.

The importance of taking into account the conditional independences between the
variables encoded in a Bayesian Network (directed acyclic graph) was
explored in the papers \cite{KurCoo02} and \cite{HanKurCoo06}. Two problems of this aspect are
discussed. First when the Bayesian Network (BN) is known, some of the conditional
independences taken from the BN are used to simplify a given expression of the D- or C- vine
copula. Secondly the problem of reconstruction of the BN from a sample data set was formulated 
under the assumption that the joint distribution is multivariate normal.
For discovering the independences and conditional independences between the
variables in \cite{HanKurCoo06} are used the correlations, the conditional
correlations and the determinant of the correlation matrix. In the present paper we also exploit
the conditional independences encoded in a Markov network which has the advantage that we do not need to know the ordering of the random variables. We will express the conditional independences in terms of information theoretical concepts which do not need any assumption on the type of copula.

In the recent work \cite{Bau11} Bauer et al. are dealing with a more general case with the
pair-copula constructions for non-Gaussian BN. In there paper the BN is
supposed to be known. The formula of probability distribution associated to
the given BN is expressed by pair-copulas. A similar idea will be used in
our approach, we will transform the so called cherry-tree copula introduced in 
\cite{KoSza10a} into a vine copula constructed from pair copula-blocks.

The truncated Regular-vine copula is defined in \cite{Kur10} and \cite{BreCzaAas10}. In \cite{Kur10}  an algorithm is developed for searching the "best vine". This algorithm uses the partial correlations. This paper suggested us the idea to prove a theorem which
ensures the construction of the best truncated Regular-vine distribution, at a given level $k$. In
order to find such a representation we give a greedy algorithm, which
generally is a good heuristic, but if some assumptions are fulfilled the
algorithm results the optimal solution.

Because the work of the present paper is strongly related to Markov networks which also
need some graph theoretical concepts, copulas and the
special case of Regular-vine copulas the second part of the paper is a preliminary part
that contains some of the concepts we will use throughout the paper. The
third part of the paper discusses under which graphical conditions of the Markov network
the multivariate copula can be expressed as a junction tree copula and as a
cherry-tree copula. Then we give a pair-copula construction (formula) and a
Regular-vine structure (graphical structure) of the cherry-tree copula. The
fourth part of the paper presents a method for finding the cherry tree copula starting from a
multivariate sample data set. In the fifth part we discuss the properties of the best fitting probability density and copula density associated to truncated R-vine.
We finish the paper with some conclusions.

\section{Preliminaries}
\label{sec:2}

In this section we introduce some concepts used in graph theory and
probability theory that we need throughout the paper and present how these
can be linked to each other. For a good overview see \cite{LauSpi88}.

\subsection{Markov Network}
\label{subsec:2.1}

We first
present the acyclic hypergraphs and junction trees. We then present a short
reminder on Markov network. We finish this part with the multivariate joint
probability distribution associated to a junction tree.

Let $V=\left\{ 1,\ldots ,d\right\} $ be a set of vertices and $\Gamma $ a
set of subsets of $V$ called {\it set of hyperedges}. A \textit{hypergraph}
consists of a set $V$ of vertices and a set $\Gamma $ of hyperedges. We
denote a hyperedge by $C_i$, where $C_i$ is a subset of $V$. If two vertices
are in the same hyperedge they are connected, which means, the hyperedge of a
hyperhraph is a complete graph on the set of vertices contained in it.

The \textit{acyclic}{\normalsize \ }\textit{hypergraph} is a special type of
hypergraph which fulfills the following requirements:

\begin{itemize}
\item  Neither of the edges of $\Gamma $ is a subset of another edge.

\item  There exists a numbering of edges for which the \textit{running
intersection property} is fullfiled: $\forall j\geq 2\quad \ \exists \ i<j:\
C_i\supset C_j\cap \left( C_1\cup \ldots \cup C_{j-1}\right) $. (Other
formulation is that for all hyperedges $C_i$ and $C_j$ with $i<j-1$,
$C_i \cap C_j \subset C_s \ \mbox{for all} \ s, i<s<j$.)
\end{itemize}

Let $S_j=C_j\cap \left( C_1\cup \ldots \cup C_{j-1}\right) $, for $j>1$ and $%
S_1=\phi $. Let $R_j=C_j\backslash S_j$. We say that $S_j$\textit{separates} 
$R_j$ from $\left( C_1\cup \ldots \cup C_{j-1}\right) \backslash S_j$, and
call $S_j$ separator set or shortly separator.

Now we link these concepts to the terminology of junction trees.

The junction tree is a special tree stucture which is equivalent to the
connected acyclic hypergraphs \cite{LauSpi88}. The nodes of the tree correspond
to the hyperedges of the connected acyclic hypergraph and are called clusters, the edges of the tree
correspond to the separator sets and called separators. The set of all
clusters is denoted by $\mathcal{C}$, the set of all separators is denoted by %
$\mathcal{S}$. The junction tree with the largest cluster containing $k$
variables is called \textit{k-width junction tree}. A vertex which is contained in only one cluster is called {\it simplicial}. The cluster which contains a simplicial is called {\it leaf cluster}.

An important relation between graphs and hypergraphs is given in \cite{LauSpi88}: A
hypergraph is acyclic if and only if it can be considered to be the set of
cliques of a triangulated graph (a graph is triangulated if every cycle of
legth greater than 4 has a chord).

In the Figure \ref{fig:1} one can see a) a triangulated graph, b) the
corresponding acyclic hypergraph and c) the corresponding junction tree.

\begin{figure}
  \includegraphics[bb=80 590 420 770,width=10cm]{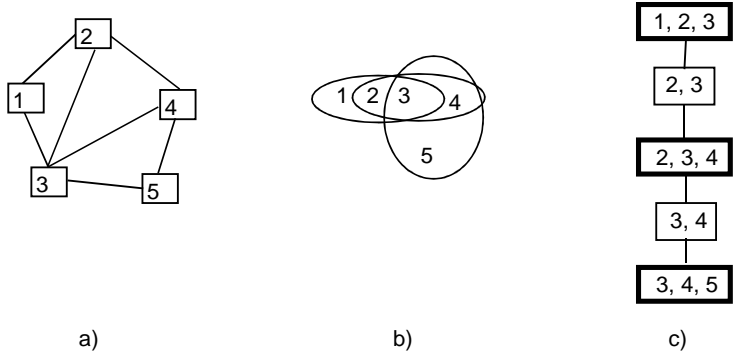}
\caption{a) Triangulated graph, b) The corresponding acyclic hypergraph, c) The corresponding junction tree which is a $t$-cherry junction tree}
\label{fig:1}       
\end{figure}

We consider the random vector $\mathbf{X}=\left( X_1,\ldots ,X_d\right) ^T$, with the
set of indicies $V=\left\{ 1,\ldots ,d\right\} $. Roughly speaking a Markov
network encodes the conditional independences between the random variables.
The graph structure associated to a Markov network consists in the set of
nodes V, and the set of edges $E=\left\{ \left( i,j\right) |i,j\in V\right\}$. 

We say that the probability distribution associated to a Markov network has the global Markov (GM) property \cite{Ham71}if in the graph $\forall A,B,C\subset V$ and C separates A and B in terms of graph then $\mathbf{X}_A$ and $\mathbf{X}_B$ are conditionally independent given $\mathbf{X}_C$, which means in terms of probabilities that 
\[P\left( \mathbf{X}_{A\cup B\cup C}\right) =\dfrac{P\left( \mathbf{X}_{A\cup
C}\right) P\left( \mathbf{X}_{A\cup C}\right) }{P\left( \mathbf{X}_C\right) }.
\]

The concept of \textit{junction tree probability distribution} is related to
the junction tree graph and to the global Markov property of the graph. A
junction tree probability distribution is defined as a product and division of
marginal probability distributions as follows:

\[
P\left( \mathbf{X}\right) =\dfrac{\prod\limits_{C\in \mathcal{C}}P\left(
\mathbf{X}_C\right) }{\prod\limits_{S\in \mathcal{S}}\left[ P\left( \mathbf{X}_S\right)
\right] ^{\nu _S-1}},
\]
where $\mathcal{C}$ is the set of clusters of the
junction tree, $\mathcal S$ is the set of separators, $\nu_S$ is the number of those clusters
which contain the separator S. We emphasize here that the equalities written as
$P(\mathbf{X})=f(P(\mathbf{X}_K), K\in \mathcal{C})$, where $f: \Omega_{\mathbf{X}}\rightarrow R$
hold for any possible realization of $\mathbf{X}$.

\begin{example}
\label{ex:1}
The probability distribution corresponding to Figure \ref{fig:1} is:
\[
\begin{array}{rcl}
P\left( \mathbf{X}\right) & = & \dfrac{P\left( \mathbf{X}_{\{1,2,3\}}\right) P\left(
\mathbf{X}_{\{2,3,4\}}\right) P\left(\mathbf{X}_{\{3,4,5\}}\right) }{P\left(\mathbf{X}_{\{2,3\}}\right)
P\left( \mathbf{X}_{\{3,4\}}\right) }\vspace{3mm} \\
 & = & \dfrac{P\left( X_1,X_2,X_3\right) P\left(
X_2,X_3,X_4\right) P\left( X_3,X_4,X_5\right) }{P\left( X_2,X_3\right)
P\left( X_3,X_4\right) }.
\end{array}
\]
\end{example}

In our paper \cite{SzaKov08} we introduced a special kind of $k$-width junction tree,
called {\it $k$-th order $t$-cherry junction tree} in order to approximate a joint
probability distribution. The $k$-th order $t$-cherry junction tree
probability distribution is assigned to the $k$-th order $t$-cherry tree,
was introduced in \cite{BuPre01}, \cite{BuSza02}.

\begin{definition}
\label{def:def1} 
The recursive construction of the \textit{k-th order
$t$-cherry tree}:

\begin{itemize}
\item  (i) The complete graph of $(k-1)$ nodes from $V$ represent the
smallest $k$-th order \textit{t}-cherry tree;

\item  (ii) By connecting a new vertex $i_k\in V$, with all $\left\{
i_1,\ldots ,i_{k-1}\right\} $ vertices of a $\left( k-1\right) $-
dimensional complete subgraph of the existing $k$-th order $t$-cherry tree, we
obtain a new $k$-th order $t$-cherry tree. $\left\{ \left\{ i_k\right\} \left\{
i_1,\ldots ,i_{k-1}\right\} \right\} $ is called \textit{k}-th order
hypercherry.

\item   (iii) A \textit{k}-th order $t$-cherry tree can be obtained from (i)
by successive application of (ii).
\end{itemize}

The \textit{k}-th order $t$-cherry tree is a special triangulated (chordal or rigid circuit) graph
therefore a junction tree structure is associated to it (see \cite{LauSpi88}).
\end{definition}

\begin{definition}
\label{def:def2}
(\cite{SzaKov08})
The \textit{k-th order $t$-cherry junction tree} 
is defined in the following way:

\begin{itemize}
\item  By using Definition \ref{def:def1} we construct a \textit{k}-th order \textit{t}%
-cherry tree over $V$.

\item  To each hypercherry $\left\{ \left\{ i_k\right\} \left\{ i_1,\ldots
,i_{k-1}\right\} \right\}$ is assigned a cluster $\left\{ i_1,\ldots
,i_{k-1},i_k\right\}$ which represents a node of the junction tree and a separator $\left\{
i_1,\ldots ,i_{k-1}\right\}$ which is an edge of the junction tree.
\end{itemize}

We denote by $\mathcal{C}_{\mbox{ch}}$, and $\mathcal{S}_{\mbox{ch}}$, the
set of clusters and separators of the $t$-cherry junction tree.
\end{definition}

\begin{definition}
\label{def:def3}
(\cite{SzaKov08})
The probability distribution given by (\ref{eq:chd}) and (\ref{eq:chf}) are called {\it t-cherry junction tree probability distribution}

\begin{equation}\label{eq:chd}
P_{\mbox{t-ch}}(\mathbf{X})=\dfrac{\prod\limits_{K\in \mathcal{C}_{\mbox{ch}}}P\left( \mathbf{X}_K\right) }{%
\prod\limits_{S\in \mathcal{S}_{\mbox{ch}}}\left( P\left( \mathbf{X}_S\right) \right) ^{\nu _s-1}}
\end{equation}
in the discrete case and
\begin{equation}\label{eq:chf}
P_{t-ch}\left( \mathbf{X}\right) =\dfrac{\prod\limits_{K\in C_{ch}}f_K\left( 
\mathbf{x}_k\right) }{\prod\limits_{S\in S_{Ch}}\left( f_S\left( \mathbf{x}%
_k\right) \right) ^{\nu _S-1}}
\end{equation}
in the continuous case, where $\nu_S$ denotes the number of clusters which contain the separator $S$.
\end{definition}

\begin{remark}
\label{rem:rem1}
The marginal probability distributions and the density functions involved in the above
formula are marginal probability distributions of $P\left( \mathbf{X}\right) $.
\end{remark}

Example \ref{ex:1} shows a 3-rd order $t$-cherry junction tree probability distribution.

In the following instead of probability distribution associated to a
junction tree we will use shortly junction tree pd and similarly instead of $k$-th order $t$-cherry
tree junction tree distribution we will use shortly $k$-th order $t$-cherry pd. 

\subsection{Copula, Regular-vine copula, junction tree copula and cherry-tree copula}
\label{subsec:2.2}

\begin{definition}\label{def:def4}
A function $C:\left[0;1\right]^{d}\rightarrow\left[0;1\right]$
is called a $d$-dimensional copula if it satisfies the following conditions:

\begin{enumerate}

\item $C\left(u_{1},\ldots,u_{d}\right)$ is increasing in each component
$u_{i}$,

\item $C\left(u_{1},\ldots,u_{i-1},0,u_{i+1},\ldots,u_{d}\right)=0$ for all
$u_{k}\in\left[0;1\right]$,\ $k\neq i,\ i=1,\ldots,n$,

\item $C\left(  1,\ldots,1,u_{i},1,\ldots,1\right)  =u_{i}$ for all $u_{i}%
\in\left[  0;1\right]  ,\ i=1,\ldots,d$,

\item C is $d$-increasing, i.e for all $\left(  u_{1,1},\ldots,u_{1,d}\right)  $
and $\left(  u_{2,1},\ldots,u_{2,d}\right)  $ in $\left[  0;1\right]  ^{d}$
with $u_{1,i}<u_{2,i}$ for all i, we have

\[
\sum\limits_{i_{1}=1}^{2}\cdots
\sum\limits_{i_{d}=1}^{2}\left(  -1\right)  ^{\sum\limits_{j=1}^{d}i_{j}%
}C\left(  u_{i_{1},1},\ldots,u_{i_{d},d}\right)  \geq0.
\]

\end{enumerate}
\end{definition}

Due to Sklar's theorem if $X_{1},\ldots,X_{d}$ are continuous random variables
defined on a common probability space, with the univariate marginal cdf's
$F_{X_{i}}\left(  x_{i}\right)  $ and the joint cdf $F_{X_{1},\ldots,X_{d}%
}\left(  x_{1},\ldots,x_{d}\right)  $ then there exists a unique copula
function $C_{X_{1},\ldots,X_{d}}\left(  u_{1},\ldots,u_{d}\right)  :\left[
0;1\right]  ^{d}\rightarrow\left[  0;1\right]  $ such that by the substitution
$u_{i}=F_{i}\left(  x_{i}\right), \ i=1,\ldots,d$ we get

\[
F_{X_{1},\ldots,X_{d}}\left(x_{1},\ldots,x_{d}\right)
=C_{X_{1},\ldots,X_{d}}\left(F_{1}\left(  x_{1}\right)  ,\ldots,F_{d}\left(  x_{d}\right)  \right)
\]
for all $\left(x_{1},\ldots,x_{d}\right)^{T}\in R^{d}.$

In the following we will use the vectorial notation
$F_{\mathbf{X}_{V}}\left(\mathbf{x}_{V}\right)=C_{X_{V}}\left(
\mathbf{u}_{V}\right)$, where $\mathbf{u}_{V}%
=\left(  F_{X_{i_{1}}}\left(  x_{i_{1}}\right)  ,\ldots,F_{X_{i_{d}}}\left(
x_{i_{d}}\right)  \right)  ^{T}$.

It is known that

\[
f_{X_{i_{1}},\ldots X_{i_{d}}}\left(  x_{i_{1}},\ldots,x_{i_{d}}\right)
=c_{X_{i_{1}},\ldots X_{i_{d}}} \left( F_{X_{i_{1}}}\left(x_{i_{1}}\right),\ldots,F_{X_{i_{d}}}\left(  x_{i_{d}}\right)  \right)
\cdot \prod\limits_{k=1}^{d} f_{X_{i_{k}}} \left(x_{i_{k}}\right)
\]

In vectorial notation this can be written as
\[
f_{\mathbf{X}_{V}}\left(  \mathbf{x}_{V}\right)  =c_{\mathbf{X}_{V}}\left(
\mathbf{u}_{V}\right)  \cdot\prod\limits_{i_{k}\in V}^{{}%
}f_{X_{i_{k}}}\left(  x_{i_{k}}\right)
\]
which can be written as
\[
c_{\mathbf{X}_{V}}\left(  \mathbf{u}_{V}%
\right)  =\dfrac{f_{\mathbf{X}_{V}}\left(  \mathbf{x}_{V}\right)  }%
{\prod\limits_{i_{k}\in V}^{{}}f_{X_{i_{k}}}\left(  x_{i_{k}}\right)  }.
\]

The Regular-vine structures were introduced by T. Bedford and R. Cooke in
\cite{BedCoo01}, \cite{BedCoo02} and described in more detail in \cite{KurCoo06}. 

If it does not cause confusion, instead of $f_{\mathbf{X}_D}$ and $c_{\mathbf{X}_D}$  
we will write $f_D$ and $c_D$. We also introduce the following notations:

\begin{tabular}{lcl}
$F_{i,j|D}$ & -- & the conditional probability distribution function of $X_i$ and $%
X_j$ given $\mathbf{X}_D$;\\
$f_{i,j|D}$ & -- & the conditional probability density function of $X_i$ and $X_j$
given $\mathbf{X}_D$, \\
$c_{i,j|D}$ & -- & the conditional copula density corresponding to  $f_{i,j|D}$,
\end{tabular}
where $D\subset V;i,j\in V\backslash D$.

According to the definition in \cite{KurCoo06}:
\begin{definition}\label{def:def5}
A \textit{Regular-vine (R-vine) on d variables} consists first of a
sequence of trees $T_1,T_2,\ldots ,T_{d-1}$ with nodes $N_i$ and edges $E_i$
for $i=1,\ldots ,d-1$, which satisfy the following conditions.
\end{definition}

\begin{itemize}
\item  $T_1$ has nodes $N_1=\left\{ 1,\ldots ,d\right\} $ and edges $E_1$.

\item  For $i=2,\ldots ,d-1$ the tree $T_i$ has nodes $N_i=E_{i-1}$.

\item  Two edges in tree $T_i$ are joined in tree $T_{i+1}$ only if they
share a common node in tree $T_i$.
\end{itemize}

The last condition usually is referred to as \textit{proximity condition} .

It is shown in \cite{BedCoo01} and \cite{KurCoo06} that the edges in an R-vine
tree can be uniquely identified by two nodes, the conditioned nodes, and a
set of conditioning nodes, i.e., edges are denoted by $e=j\left( e\right)
,k\left( e\right) |D\left( E\right) $ where $D\left( E\right) $ is the
conditioning set. For a good overview see \cite{Cza10}. The next theorem which can be regarded as a central theorem of R-vines see \cite{BedCoo01} links the
probability density function to the copulas assigned to the R- vine
structure. In \cite{BedCoo01} it is shown that there exists a unique probability density assigned to the R-vine, in \cite{BedCoo02} it is shown that this probability distribution can be expressed as (\ref{eq:eq1}).

\begin{theorem}\label{theo:theo1}
The joint density of $\mathbf{X}=\left( X_1,\ldots ,X_d\right) $ is
uniquely determined and given by:
\begin{equation}\label{eq:eq1}
\begin{array}{l}
f\left( x_1,\ldots ,x_d\right)=\left[ \prod\limits_{k=1}^df_k\left(x_k\right) \right]\\
\cdot\prod\limits_{i=1}^{d-1}\prod\limits_{e\in E_i}c_{j\left(
e\right) ,k\left( e\right) |D\left( e\right) }\left( F_{j\left( e\right)
|D\left( e\right) }\left( x_{j\left( e\right) }|\mathbf{x}_{D\left( e\right)
}\right) ,F_{k\left( e\right) |D\left( e\right) }\left( x_{k\left( e\right)
}|\mathbf{x}_{D\left( e\right) }\right) \right).
\end{array}
\end{equation}
\end{theorem}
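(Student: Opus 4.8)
The plan is to prove (\ref{eq:eq1}) by induction on the dimension $d$, exploiting the tree structure of the R-vine to organize the recursion. The base case $d=1$ is immediate, since the product over copulas is empty and $f(x_1)=f_1(x_1)$; equivalently, the case $d=2$ is exactly Sklar's theorem in the bivariate density form stated earlier, $f_{1,2}(x_1,x_2)=c_{1,2}(F_1(x_1),F_2(x_2))f_1(x_1)f_2(x_2)$. The inductive content is to reduce a $d$-dimensional density to lower-dimensional conditional densities and match the emerging pair-copulas to the edges of the vine.

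The fundamental building block I would establish first is a conditional version of Sklar's theorem. For indices $i,j$ and a conditioning set $D$ with $i,j\notin D$, conditioning on $\mathbf{x}_D$ turns $(X_i,X_j)$ into a genuine bivariate pair, whose joint conditional density factors as
\[
f_{i,j|D}\left(x_i,x_j\mid\mathbf{x}_D\right)=c_{i,j|D}\left(F_{i|D}\left(x_i\mid\mathbf{x}_D\right),F_{j|D}\left(x_j\mid\mathbf{x}_D\right)\right)f_{i|D}\left(x_i\mid\mathbf{x}_D\right)f_{j|D}\left(x_j\mid\mathbf{x}_D\right).
\]
Dividing through by $f_{j|D}$ yields the \emph{peeling identity} $f_{i|j,D}=c_{i,j|D}\cdot f_{i|D}$, which strips a single variable from the conditioning set at the cost of exactly one pair-copula factor. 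I would then factor the joint density by the chain rule,
\[
f\left(x_1,\ldots,x_d\right)=f_1(x_1)\prod_{k=2}^{d}f\left(x_k\mid x_1,\ldots,x_{k-1}\right),
\]
and apply the peeling identity repeatedly to each conditional factor, shrinking every conditioning set one index at a time until only the marginals $f_k(x_k)$ remain. The conditional arguments $F_{\cdot\mid D}$ that appear are themselves well defined and determined by the lower-level copulas, via the relation obtained by partially differentiating a pair-copula with respect to one argument (the ``h-function''); hence the right-hand side of (\ref{eq:eq1}) is completely specified once the marginals and the edge copulas are fixed. This same observation delivers the uniqueness claim: specifying the marginals together with $c_{j(e),k(e)|D(e)}$ for every edge determines all the conditional distributions entering the formula, and therefore $f$ itself, so at most one density is consistent with the given R-vine.

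The main obstacle is the combinatorial bookkeeping, not the analytic peeling step. I must show that the pair-copulas produced by the recursion are in bijection with the edges $\bigcup_i E_i$ of the R-vine, that each $c_{j(e),k(e)|D(e)}$ occurs exactly once with the correct conditioned pair $\{j(e),k(e)\}$ and conditioning set $D(e)$, and that no factor is omitted or double-counted. This is precisely where the proximity condition of Definition \ref{def:def5} is needed: because two edges of $T_i$ may be joined in $T_{i+1}$ only when they share a node in $T_i$, the conditioned pair at level $i+1$ always shares a symmetric-difference structure with its parents, so the conditioning sets grow consistently by one index per level and are well defined. Verifying this correspondence level by level, and checking that the order in which the chain rule peels variables can always be realized by a valid descent through the vine trees, is the heart of the argument and the place where care is required.
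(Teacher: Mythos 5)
You should first be aware that the paper contains no proof of Theorem~\ref{theo:theo1}: it is quoted from the literature, with existence and uniqueness attributed to \cite{BedCoo01} and the explicit formula (\ref{eq:eq1}) to \cite{BedCoo02}. So the only meaningful comparison is with the Bedford--Cooke argument that the paper cites rather than with an in-paper proof.

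Your analytic ingredients are correct: the conditional Sklar factorization and the peeling identity $f_{i|\{j\}\cup D}=c_{i,j|D}\left(F_{i|D},F_{j|D}\right)f_{i|D}$ are exactly the tools used in the literature, and for C- and D-vines the chain-rule-plus-peeling route you describe goes through essentially as you outline it. The gap is that for a \emph{general} R-vine you have explicitly deferred the one step that actually needs proving: that there exists an ordering of the variables for which every conditional factor $f\left(x_k\mid x_1,\ldots,x_{k-1}\right)$ can be peeled down using only pair-copulas indexed by edges of the given vine, with every edge of $\bigcup_i E_i$ consumed exactly once across all factors. You name this ``the heart of the argument and the place where care is required'' and then stop; but this is precisely the content that the proximity condition of Definition~\ref{def:def5} must be shown to deliver, and a global chain-rule ordering compatible with an arbitrary regular vine does not exist by inspection. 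The proof in \cite{BedCoo01} and \cite{BedCoo02} sidesteps the issue by inducting on $d$ through the single edge $j,k|D$ of the top tree $T_{d-1}$: one writes $f=c_{j,k|D}\cdot f_{\{j\}\cup D}\cdot f_{\{k\}\cup D}/f_D$ and observes that $\{j\}\cup D$ and $\{k\}\cup D$ each carry a sub-R-vine of the original vine, so the induction hypothesis applies to each factor and no global peeling order is ever needed; reorganizing your argument along those lines would close the hole. A secondary point: your uniqueness paragraph shows that at most one density is consistent with the specified marginals and edge copulas, but the theorem also asserts existence --- that the right-hand side of (\ref{eq:eq1}) is a genuine density realizing the prescribed margins and conditional pair-copulas --- and that half is not addressed.
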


The arguments of the pair copulas are conditional distribution functions and
can be evaluated using the following expression given by H. Joe \cite{Joe97}
\[
\begin{array}{l}
F_{j\left( e\right) |D\left( e\right) }\left( x_{j\left( e\right) }|\mathbf{x%
}_{D\left( e\right) }\right) \\
=\dfrac{\partial C_{j\left( e\right) ,i|D\left(
e\right) \backslash i}\left( F_{j\left( e\right) |D\left( e\right)
\backslash i}\left( x_{j\left( e\right) }|\mathbf{x}_{D\left( e\right)
\backslash i}\right) ,F_{i|D\left( e\right) \backslash i}\left( x_i|\mathbf{x%
}_{D\left( e\right) \backslash i}\right) \right) }{\partial F_{i|D\left(
e\right) \backslash i}\left( x_i|\mathbf{x}_{D\left( e\right) \backslash
i}\right) }, 
\end{array}
\]
where $i\in D\left( e\right), j\left( e\right) \notin D\left( e\right)$.

We give now an other definition which is related to the $k$-th order $t$-cherry junction tree structure, 
see Definition \ref{def:def2}, which is in fact a $k$-width order uniform hypertree.

\begin{definition}\label{def:def6}
The Regular-vine structure is given by a sequence of 
$t$-cherry junction trees $T_1,T_2,\ldots ,T_{d-1}$ as follows

\begin{itemize}
\item  $T_1$is a regular tree on $V=\left\{ 1,\ldots ,d\right\} $, the set
of edges $E_1=\left\{ e_i^1=\left( l_i,m_i\right)\right. $ $\left.,i=1,\ldots ,d-1,\
l_i,m_i\in V\right\} $; The copula densities $c_{l_i,m_i}\left( F_{l_i}\left(
x_{_{li}}\right) ,F_{m_i}\left( x_{_{m_i}}\right) \right)$ are assigned to
the edges of this tree.

\item  $T_2$ is the second order $t$-cherry junction tree on $%
V=\left\{ 1,\ldots ,d\right\} $, with the set of clusters $E_2=\left\{
e_i^2,i=1,\ldots ,d-1|e_i^2=e_i^1\right\} $ , $\left| e_i^1\right| =2$; the
copula densities

\[
c_{a_{ij}^2,b_{ij}^2|S_{ij}^2}\left( F_{a_{ij}^2|S_{ij}^2}\left(
x_{a_{ij}^2}|\mathbf{x}_{S_{ij}^2}\right) ,F_{b_{ij}^2|S_{ij}^2}\left(
x_{b_{ij}^2}|\mathbf{x}_{S_{ij}^2}\right) \right) 
\]
are assigned to each pair clusters $e_i^2$ and $e_j^2$ , which are linked in
the junction tree $T_2$, where:
\[
\begin{array}{rcl}
S_{ij}^2 &=&e_i^2\cap e_j^2,\vspace{2mm}\\
a_{ij}^2 &=&e_i^2-S_{ij}^2 \vspace{2mm}\\
b_{ij}^2 &=&e_i^2-S_{ij}^2.
\end{array}
\]
\item  $T_k$ is one of the possible $k$-th order $t$-cherry junction tree on $%
V=\left\{ 1,\ldots ,d\right\}$, with the set of clusters $E_k=\left\{
e_i^k,i=1,\ldots ,d-k+1\right\}$ , where each $e_i^k,\left| e_i^k\right| =k$
is obtained from the union of two linked clusters in the $\left( k-1\right) $%
-th order $t$-cherry junction tree $T_{k-1}$ ; The copula densities
\[
c_{a_{ij}^k,b_{ij}^k|S_{ij}^k}\left( F_{a_{ij}^k|S_{ij}^k}\left(
x_{a_{ij}^k}|\mathbf{x}_{S_{ij}^k}\right) ,F_{b_{ij}^k|S_{ij}^k}\left(
x_{b_{ij}^k}|\mathbf{x}_{S_{ij}^k}\right) \right) 
\]
are assigned to each pair of clusters $e_i^k$ and $e_j^k$, which are linked
in the $T_k$ junction tree, where:
\[
\begin{array}{rcl}
S^k &=&e_i^k\cap e_j^k,\vspace{2mm}\\
a_{ij}^k &=&e_i^k-S_{ij}^k \vspace{2mm}\\
b_{ij}^2 &=&e_i^k-S_{ij}^k.
\end{array}
\]
\end{itemize}
\end{definition}

\begin{theorem}\label{theo:theo2}
The Regular-vine probability distribution associated to the R-vine structure given in Definition \ref{def:def6} can be expressed as:
\[
\begin{array}{l}
f\left( x_1,\ldots ,x_d\right) =\left[ \prod\limits_{i=1}^df_i\left(
x_i\right) \right] \left[ \prod\limits_{i=1}^{d-1}c_{e_i^1}\left( F_{l_i}\left(
x_{_{li}}\right) ,F_{l_i}\left( x_{_{li}}\right) \right) \right]\\
\cdot \prod\limits_{i=2}^{d-1}\prod\limits_{e\in
E_i}c_{a_{ij}^k,b_{ij}^k|S_{ij}^k}\left( F_{a_{ij}^k|S_{ij}^k}\left(
x_{a_{ij}^k}|\mathbf{x}_{S_{ij}^k}\right) ,F_{b_{ij}^k|S_{ij}^k}\left(
x_{b_{ij}^k}|\mathbf{x}_{S_{ij}^k}\right) \right).
\end{array}
\]
\end{theorem}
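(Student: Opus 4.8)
The plan is to show that the $t$-cherry-junction-tree-based R-vine structure of Definition~\ref{def:def6} is in fact a genuine R-vine in the sense of Definition~\ref{def:def5}, and then simply invoke Theorem~\ref{theo:theo1}. The claimed factorization in Theorem~\ref{theo:theo2} has exactly the shape of~(\ref{eq:eq1}): a product of marginals $\prod_i f_i(x_i)$ times a product over trees $T_1,\ldots,T_{d-1}$ of pair copulas $c_{j(e),k(e)|D(e)}$ evaluated at conditional cdf's. So if I can check that the sequence of $t$-cherry junction trees satisfies the three bullet conditions following Definition~\ref{def:def5} (node set of $T_1$ is $V$; nodes of $T_i$ are edges of $T_{i-1}$; and the proximity condition), then the statement is an immediate corollary, with the conditioned/conditioning-set reading $j(e)=a_{ij}^k$, $k(e)=b_{ij}^k$, $D(e)=S_{ij}^k$.

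First I would verify the tree-structure requirements level by level. At level $1$, $T_1$ is declared to be a spanning tree on $V$, matching the first R-vine bullet. For the inductive step I would argue that passing from the $(k-1)$-th order to the $k$-th order $t$-cherry junction tree is precisely the operation ``take two clusters that are linked (i.e.\ share a separator edge) in $T_{k-1}$ and form their union as a new cluster.'' Two linked clusters $e_i^{k-1},e_j^{k-1}$ are adjacent edges/nodes of the previous tree, so forming $e^k=e_i^{k-1}\cup e_j^{k-1}$ is exactly the R-vine rule ``two edges of $T_{k-1}$ are joined in $T_k$ only if they share a common node.'' This is where the running intersection / uniform-hypertree property of the $t$-cherry construction (Definitions~\ref{def:def1}--\ref{def:def2}) does the real work: it guarantees that $|e^k|=k$, that the overlap $S^k=e_i^k\cap e_j^k$ has size $k-1$, and hence that the two symmetric-difference vertices $a_{ij}^k,b_{ij}^k$ are well-defined single conditioned nodes with conditioning set $S_{ij}^k$ of the correct cardinality. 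I would confirm that the cluster/separator bookkeeping of Definition~\ref{def:def2} makes each new edge carry a conditioned pair and a conditioning set that enlarges by one vertex at each level, so the indexing conventions of~(\ref{eq:eq1}) are met.

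With the proximity condition in hand, Theorem~\ref{theo:theo1} applies verbatim and yields~(\ref{eq:eq1}); rewriting the generic edge labels $(j(e),k(e),D(e))$ in the $t$-cherry notation $(a_{ij}^k,b_{ij}^k,S_{ij}^k)$ and peeling off the $i=1$ factor (where the conditioning set is empty, so the copulas are the unconditional $c_{e_i^1}(F_{l_i},F_{m_i})$) produces exactly the displayed formula. The conditional cdf arguments are evaluated by Joe's recursion quoted after Theorem~\ref{theo:theo1}, which is consistent because at each level the conditioning set grows by a single index drawn from a linked lower-level cluster.

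\textbf{The main obstacle} I expect is not analytic but structural: rigorously establishing that the ``union of two linked clusters'' growth rule really produces, at every level $k$, a valid $k$-th order $t$-cherry junction tree whose edges satisfy the R-vine proximity condition, and that the resulting object is a \emph{sequence} of nested trees on the correct node sets (edges of one becoming nodes of the next) rather than an arbitrary collection of clusters. In particular I would need to rule out degeneracies where $e_i^k\cap e_j^k$ fails to have size $k-1$, or where the same cluster could be formed two different ways, which would break the uniqueness part inherited from Theorem~\ref{theo:theo1}. Handling this cleanly likely requires an induction on $k$ using the running intersection property, and it is the step where the special geometry of $t$-cherry trees (as opposed to general R-vines) must be used essentially; once that is secured, the density formula itself follows with no further computation.
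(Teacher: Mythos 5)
Your proposal is correct and follows essentially the same route the paper intends: the paper gives no explicit proof of Theorem~\ref{theo:theo2}, presenting it as an immediate consequence of Theorem~\ref{theo:theo1} once the sequence of $t$-cherry junction trees in Definition~\ref{def:def6} is recognized as an R-vine in the sense of Definition~\ref{def:def5}, which is precisely your argument. Your added care about verifying the proximity condition and the cardinality of the separators $S_{ij}^k$ is a useful (and in the paper, unstated) check rather than a departure from its approach.
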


For the following remark see \cite{Aasetal09}, p. 186.

\begin{remark}\label{rem:rem2}
$X_i$ and $X_j$ are conditional independent given the set of variables $%
\mathbf{X}_A, A \subset V\backslash \left\{ i,j\right\} $ if and only if
\end{remark}

\[
c_{ij|A}\left( F_{i|A}\left( x_i|\mathbf{x}_A\right) ,F_{j|A}\left( x_j|%
\mathbf{x}_A\right) \right) =1. 
\]

The following theorem is an important consequence of Theorem \ref{theo:theo1}.

\begin{theorem}\label{theo:theo3}
If in an R-vine the conditional copula densities corresponding to the trees $T_k, T_{k+1}, \ldots, T_{d-1}$ are all equal to 1 then there exists a joint probability distribution which can be expressed only with the
conditional copula densities assigned to $T_1,\ldots ,T_{k-1}$:
\[
\begin{array}{l}
f\left( x_1,\ldots ,x_d\right) =\left[ \prod\limits_{i=1}^df_i\left(
x_i\right) \right] \left[ \prod\limits_{i=1}^{d-1}c_{e_i^1}\left( F_{l_i}\left(
x_{_{li}}\right) ,F_{l_i}\left( x_{_{li}}\right) \right) \right]\\
\cdot \prod\limits_{i=2}^{k-1}\prod\limits_{e\in
E_i}c_{a_{ij}^k,b_{ij}^k|S_{ij}^k}\left( F_{a_{ij}^k|S_{ij}^k}\left(
x_{a_{ij}^k}|\mathbf{x}_{S_{ij}^k}\right) ,F_{b_{ij}^k|S_{ij}^k}\left(
x_{b_{ij}^k}|\mathbf{x}_{S_{ij}^k}\right) \right).
\end{array}
\]
\end{theorem}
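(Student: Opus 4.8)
The plan is to derive the statement directly from the central R-vine representation of Theorem~\ref{theo:theo1} together with the independence characterization of Remark~\ref{rem:rem2}. First I would start from the complete factorization (\ref{eq:eq1}), in which the joint density is the product of the marginal densities $\prod_{k=1}^d f_k(x_k)$ and of the pair-copula densities $c_{j(e),k(e)|D(e)}(\cdots)$ ranging over all edges $e\in E_i$ of all trees $T_1,\ldots,T_{d-1}$. The key structural observation is that the edge set is partitioned by the tree index $i$, so the double product splits cleanly into a ``low'' part over $i=1,\ldots,k-1$ and a ``high'' part over $i=k,\ldots,d-1$.

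Next I would invoke the hypothesis: every conditional copula density attached to a tree $T_i$ with $i\ge k$ equals $1$. Hence each factor in the high part is identically $1$, the entire high product collapses to $1$, and it disappears from (\ref{eq:eq1}). What remains is exactly the asserted formula, with the pair-copulas taken only over $T_1,\ldots,T_{k-1}$.

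The step I expect to be the real content, rather than the algebra, is verifying that the arguments of the surviving low-tree copulas are unaffected by discarding the high-tree copulas. By the recursive h-function formula of Joe quoted above, a conditional distribution function $F_{j(e)|D(e)}$ appearing as an argument in tree $T_i$ is built up from copulas whose conditioning sets are strictly smaller, i.e. from copulas living in trees $T_1,\ldots,T_{i-1}$. Consequently, for every edge in $T_1,\ldots,T_{k-1}$ the arguments depend only on copulas of tree index at most $k-2$, never on the discarded trees $T_k,\ldots,T_{d-1}$. This ensures that setting the high copulas to $1$ deletes clean multiplicative factors without perturbing any retained term, so no circular dependency arises from the R-vine recursion.

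Finally I would argue that the truncated expression is a genuine joint density. Since the high factors equal $1$ pointwise, the truncated product coincides identically with the full product (\ref{eq:eq1}), which by Theorem~\ref{theo:theo1} is the unique density associated with the R-vine; equivalently, by Remark~\ref{rem:rem2} the hypothesis states that the corresponding conditioned pairs are conditionally independent, so replacing those pair-copulas by independence copulas yields a valid R-vine specification whose density is precisely the reduced formula. This establishes both the existence of the distribution and the claimed representation.
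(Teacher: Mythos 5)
Your proposal is correct and follows exactly the route the paper intends: the paper gives no explicit proof of Theorem~\ref{theo:theo3}, stating only that it is ``an important consequence of Theorem~\ref{theo:theo1}'', and your argument (split the product in (\ref{eq:eq1}) by tree index, drop the factors identically equal to $1$, and check via the recursive $h$-function formula that the surviving arguments are unaffected) is precisely that consequence spelled out. Your extra care about the arguments of the retained pair-copulas depending only on lower trees is a genuine point the paper leaves implicit, but it does not change the approach.
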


The following definition of truncated vine at level $k$ is given in
\cite{BreCzaAas10}.

\begin{definition}\label{def:def7}
A \textit{pairwisely truncated R-vine at level k} (or truncated R-vine at level $k$) is a special R-vine copula with the property that all pair-copulas
with conditioning set equal to, or larger than $k$, are set to bivariate independence copulas.
\end{definition}

There arise the following questions. What special properties have the probability
distribution, if we set to 1 the conditional copula densities associated to
the trees $T_k,\ldots ,T_{d-1}$ of its Regular vine? Which are the
properties of the Markov network associated? We will answer these questions
in Section \ref{sec:3} and Section \ref{sec:5}.

The problem of finding the optimal truncation of the vine structure is
formulated in \cite{Kur10} as follows: '' If we assume that we can assign
the independent copula to nodes of the vine with small absolute values of
partial correlations, then this algorithm can be used to find an optimal
truncation of a vine structure.'' Kurovicka defined as ''best vine'' the one
whose nodes of the top trees (tree with most conditioning) correspond to the
smallest absolute partial correlations. However small partial correlation
result conditional independence only under restrictive assumption, so our
approach deals with a more general case in Section \ref{sec:3}.

In \cite{KoSza10a} we proved a theorem which connects the general junction tree probability distributions with the junction tree copulas. This theorem can be adapted to the $t$-cherry junction trees in the following way.

\begin{theorem}\label{theo:theo4} 
The copula density function associated to a junction tree
probability distribution defined in Definition \ref{def:def3}
\[
f_{\mathbf{X}}\left(  \mathbf{x}\right)
=\dfrac{\prod\limits_{K\in {\mathcal C}_{ch}}f_{\mathbf{X}_{K}}\left(  \mathbf{x}%
_{K}\right)  }{\prod\limits_{S\in\mathcal{S}_{ch}}\left[  f_{\mathbf{X}_{S}}\left(
\mathbf{x}_{S}\right)  \right]  ^{v_{S}-1}},
\]
is given by

\begin{equation}\label{eq:eq2}
c_{\mathbf{X}}\left(  \mathbf{u}_{V}\right)  = \dfrac{\prod\limits_{K\in {\mathcal C}_{ch}
}c_{\mathbf{X}_{K}}\left(  \mathbf{u}_{K}\right)  }{\prod\limits_{S\in
\mathcal{S}_{ch}}\left[  c_{\mathbf{X}_{S}}\left(  \mathbf{u}_{S}\right)  \right]
^{v_{S}-1}}.
\end{equation}
\end{theorem}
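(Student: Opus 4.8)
The plan is to reduce the copula identity (\ref{eq:eq2}) to the already-assumed density factorisation by using, three times over, the Sklar relation between a joint density and its copula density. The relation $c_{\mathbf{X}_V}(\mathbf{u}_V)=f_{\mathbf{X}_V}(\mathbf{x}_V)/\prod_{i\in V}f_{X_i}(x_i)$ recorded in Section \ref{subsec:2.2} is in no way special to the full index set $V$: applying Sklar's theorem to a sub-vector $\mathbf{X}_D$ yields $c_{\mathbf{X}_D}(\mathbf{u}_D)=f_{\mathbf{X}_D}(\mathbf{x}_D)/\prod_{i\in D}f_{X_i}(x_i)$, since the univariate margins of $\mathbf{X}_D$ are precisely the $f_{X_i}$ with $i\in D$. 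First I would record this relation for $D=V$, for every cluster $D=K\in\mathcal{C}_{ch}$, and for every separator $D=S\in\mathcal{S}_{ch}$.

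Next I would begin from the right-hand side of (\ref{eq:eq2}) and insert the cluster and separator forms $c_{\mathbf{X}_K}=f_{\mathbf{X}_K}/\prod_{i\in K}f_{X_i}$ and $c_{\mathbf{X}_S}=f_{\mathbf{X}_S}/\prod_{i\in S}f_{X_i}$. Regrouping the factors splits the expression into a \emph{density piece} $\prod_{K\in\mathcal{C}_{ch}}f_{\mathbf{X}_K}\big/\prod_{S\in\mathcal{S}_{ch}}f_{\mathbf{X}_S}^{\,\nu_S-1}$, which equals $f_{\mathbf{X}}(\mathbf{x})$ by the hypothesis of the theorem (equivalently Definition \ref{def:def3}), and a \emph{univariate piece} involving only the margins $f_{X_i}$. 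Since the target equals $f_{\mathbf{X}}(\mathbf{x})/\prod_{i\in V}f_{X_i}(x_i)$ by the Sklar relation for $D=V$, the theorem will follow once the univariate piece is shown to collapse, that is once
\[
\frac{\prod_{K\in\mathcal{C}_{ch}}\prod_{i\in K}f_{X_i}(x_i)}{\prod_{S\in\mathcal{S}_{ch}}\left[\prod_{i\in S}f_{X_i}(x_i)\right]^{\nu_S-1}}=\prod_{i\in V}f_{X_i}(x_i).
\]

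The heart of the matter, and the one place I expect real work, is this cancellation. Comparing the exponent of a fixed margin $f_{X_i}$ on the two sides, it is equivalent to the combinatorial identity
\[
\#\{K\in\mathcal{C}_{ch}:i\in K\}-\sum_{S\in\mathcal{S}_{ch},\,i\in S}(\nu_S-1)=1\qquad\text{for every }i\in V.
\]
I would establish it from the running intersection property: the clusters containing $i$ induce a connected subtree of the junction tree, so if $c_i$ clusters contain $i$ this subtree has $c_i-1$ edges, and a junction-tree edge carries $i$ in its separator exactly when both endpoint clusters contain $i$; hence exactly $c_i-1$ separator edges pass through $i$. Because a $t$-cherry junction tree has separators of the uniform size $k-1$, within the subtree formed by the clusters containing a given separator $S\ni i$ every edge has separator equal to $S$; hence each such distinct $S$ accounts for precisely $\nu_S-1$ of the edges through $i$, so that $\sum_{S\ni i}(\nu_S-1)=c_i-1$ and the net exponent is $c_i-(c_i-1)=1$. (Equivalently, the displayed identity is just the factorisation of Definition \ref{def:def3} written for a vector with independent components, which the junction-tree formula necessarily reproduces.) With the univariate piece reduced to $1/\prod_{i\in V}f_{X_i}(x_i)$ and the density piece identified as $f_{\mathbf{X}}(\mathbf{x})$, the right-hand side of (\ref{eq:eq2}) becomes $c_{\mathbf{X}}(\mathbf{u}_V)$ and the proof is complete.
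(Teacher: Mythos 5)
Your argument is correct and is essentially the intended one: the paper does not reproduce a proof here (it defers Theorem \ref{theo:theo4} to \cite{KoSza10a}), but the substitution $c_{\mathbf{X}_D}=f_{\mathbf{X}_D}/\prod_{i\in D}f_{X_i}$ for $D=V$, $D=K$, $D=S$ followed by cancellation of the univariate margins is exactly the manipulation the paper relies on, and the combinatorial fact you isolate is the same ``each variable belongs once more to the clusters than to the separators'' claim invoked without justification in the proof of Theorem \ref{theo:theo8}. Your extra step of actually verifying that claim via the running intersection property --- including the observation that the uniform cluster/separator sizes of the $t$-cherry junction tree are what guarantee each distinct separator $S$ labels exactly $\nu_S-1$ edges --- is more careful than the paper and is worth keeping, since the $\nu_S-1$ exponent convention can fail for junction trees with non-uniform separator sizes.
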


\begin{definition}
\label{def:def8}
The copula density given by Formula (\ref{eq:eq2}) is called {\it t-cherry junction tree copula density} or
simply $t$-cherry copula.
\end{definition}

\section{The characteristics of the Markov network associated to a
continuous joint pd which can be expressed as a truncated R-vine}
\label{sec:3}

In this part we refer to Regular-vines as they are defined in Definition \ref{def:def6}. 
First we illustrate the main ideas on an example.

\begin{figure}[h!]
\centering\includegraphics[bb=65 160 515 750,width=8cm]{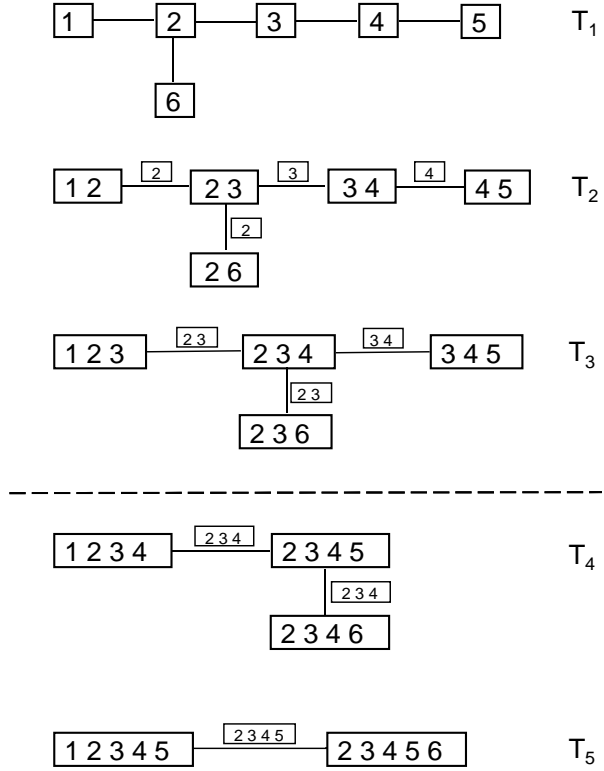}
\caption{Example for an R-vine structure on 6 variables using Definition \ref{def:def6}}
\label{fig:fig2}
\end{figure}

The edge set of the first tree and the sequence of the $t$-cherry trees (in Figure \ref{fig:fig2}) together with the copula densities determined by Definition \ref{def:def6} are the following:

\[
\begin{array}{rl}
T_1: & E_1=\left\{ \left( 1,2\right), \left( 2,3\right), \left(2,6\right), \left( 3,4\right), \left( 4,5\right) \right\}, \\
& c_{1,2},c_{2,3},c_{2,6},c_{3,4},c_{4,5};\\
T_2: & E_2=\left\{e_1^2=\left(1,2\right), e_2^2=\left(2,3\right),e_3^2=\left(2,6\right), e_4^2=\left(3,4\right), 
                  e_5^2=\left(4,5\right)\right\}\\
& S_{1,2}^2=e_1^2\cap e_2^2=\left\{ 2\right\},\\
& \hspace{10mm}a_{1,2}^2=e_1^2-S_{1,2}^2=\left\{ 1\right\},
  b_{1,2}^2=e_2^2-S_{1,2}^2=\left\{ 3\right\},
  c_{a_{1,2}^2,b_{1,2}^2|S_{1,2}^2}=c_{1,3|2}\\
& S_{2,3}^2=e_2^2\cap e_3^2=\left\{ 2\right\},\\
& \hspace{10mm}a_{2,3}^2=e_2^2-S_{2,3}^2=\left\{ 3\right\},
  b_{2,3}^2=e_2^2-S_{2,3}^2=\left\{ 6\right\},
  c_{a_{2,3}^2,b_{2,3}^2|S_{2,3}^2}=c_{3,6|2}\\
& S_{2,4}^2=e_2^2\cap e_4^2=\left\{ 3\right\},\\ 
& \hspace{10mm}a_{2,4}^2=e_2^2-S_{2,4}^2=\left\{ 2\right\},
  b_{2,4}^2=e_4^2-S_{2,4}^2=\left\{ 4\right\},
  c_{a_{2,4}^2,b_{2,4}^2|S_{2,4}^2}=c_{2,4|3}\\
& S_{4,5}^2=e_4^2\cap e_5^2=\left\{ 4\right\},\\
& \hspace{10mm}a_{4,5}^2=e_4^2-S_{4,5}^2=\left\{ 3\right\},
  b_{4,5}^2=e_5^2-S_{4,5}^2=\left\{ 5\right\},
  c_{a_{4,5}^2,b_{4,5}^2|S_{4,5}^2}=c_{3,5|4};\\
T_3: & E_3=\left\{ e_1^3=\left( 1,2,3\right) ,e_2^3=\left( 2,3,4\right)
                  ,e_3^3=\left( 2,3,6\right) ,e_4^3=\left( 3,4,5\right) \right\}\\
& S_{1,2}^3=e_1^3\cap e_2^3=\left\{ 2,3\right\},\\
& \hspace{10mm}a_{1,2}^3=e_1^3-S_{1,2}^3=\left\{ 1\right\},
  b_{1,2}^2=e_2^3-S_{1,2}^3=\left\{ 4\right\},
  c_{a_{1,2}^3,b_{1,2}^3|S_{1,2}^3}=c_{1,4|2,3}\\ 
& S_{2,3}^3=e_2^3\cap e_3^3=\left\{ 2,3\right\},\\
& \hspace{10mm}a_{2,3}^3=e_2^3-S_{2,3}^3=\left\{ 4\right\},
  b_{2,3}^2=e_3^3-S_{2,3}^3=\left\{ 6\right\},
  c_{a_{2,3}^3,b_{2,3}^3|S_{2,3}^3}=c_{4,6|2,3}\\  
& S_{2,4}^3=e_2^3\cap e_4^3=\left\{ 3,4\right\},\\
& \hspace{10mm}a_{2,4}^3=e_2^3-S_{2,4}^3=\left\{ 2\right\},
  b_{2,4}^2=e_4^3-S_{2,4}^3=\left\{ 5\right\},
  c_{a_{2,4}^3,b_{2,4}^3|S_{2,4}^3}=c_{2,5|3,4};\\  
T_4: & E_4=\left\{ e_1^4=\left( 1,2,3,4\right) ,e_2^4=\left(2,3,4,5\right) ,e_3^4=\left( 2,3,4,6\right) \right\}\\
& S_{1,2}^4=e_1^4\cap e_2^4=\left\{ 2,3,4\right\},\\
& \hspace{10mm}a_{1,2}^4=e_1^4-S_{1,2}^4=\left\{ 1\right\},
  b_{1,2}^4=e_2^4-S_{1,2}^4=\left\{ 5\right\},
  c_{a_{1,2}^4,b_{1,2}^4|S_{1,2}^4}=c_{1,5|2,3,4}\\
& S_{2,3}^3=e_2^4\cap e_3^4=\left\{ 2,3,4\right\},\\
& \hspace{10mm}a_{2,3}^4=e_2^4-S_{2,3}^4=\left\{ 5\right\},
  b_{2,3}^4=e_3^4-S_{2,3}^4=\left\{ 6\right\},
  c_{a_{2,3}^4,b_{2,3}^4|S_{2,3}^4}=c_{5,6|2,3,4}\\ 
T_5: & E_5=\left\{ e_1^5=\left( 1,2,3,4,5\right) ,e_2^5=\left(2,3,4,5,6\right) \right\}\\
& S_{1,2}^5=e_1^5\cap e_2^5=\left\{ 2,3,4,5\right\},\\
& \hspace{10mm}a_{1,2}^5=e_1^5-S_{1,2}^5=\left\{ 1\right\},
  b_{1,2}^5=e_2^5-S_{1,2}^5=\left\{ 6\right\},
  c_{a_{1,2}^5,b_{1,2}^5|S_{1,2}^5}=c_{1,6|2,3,4,5}.                    
\end{array}
\]

The joint probability density function of $\mathbf{X=}\left( X_1,\ldots
,X_6\right)$ can be expressed by Theorem \ref{theo:theo2} as follows:
\[
\begin{array}{l}
f\left( x_1,x_2,x_3,x_4,x_5,x_6\right) = \\
= \left( \prod\limits_{i=1}^6f\left(x_i\right) \right) c_{1,2}\left( F_1\left( x_1\right) ,F_2\left( x_2\right)\right) \cdot c_{2,3}\left( F_2\left( x_2\right) ,F_3\left( x_3\right)\right) \cdot c_{2,6}\left( F_2\left( x_2\right) ,F_6\left( x_6\right)\right) \\
\cdot c_{3,4}\left( F_3\left( x_3\right) ,F_4\left( x_4\right) \right)\\
\cdot c_{4,5}\left( F_4\left( x_4\right) ,F_5\left( x_5\right) \right)\\
\cdot c_{1,3|2}\left( F_{1|2}\left( x_1|x_2\right) ,F_{3|2}\left( x_3|x_2\right)\right) \\
\cdot c_{3,6|2}\left( F_{3|2}\left( x_3|x_2\right) ,F_{6|2}\left(x_6|x_2\right) \right) \\
\cdot c_{2,4|3}\left( F_{2|3}\left( x_2|x_3\right) ,F_{4|3}\left( x_4|x_3\right)\right) \\ 
\cdot c_{3,5|4}\left( F_{3|4}\left( x_3|x_4\right) ,F_{5|4}\left(x_5|x_4\right) \right) \\
\cdot c_{1,4|2,3}\left( F_{1|2,3}\left( x_1|x_2,x_3\right) ,F_{4|2,3}\left(x_4|x_2,x_3\right) \right) \\ 
\cdot c_{4,6|2,3}\left( F_{4|2,3}\left(x_4|x_2,x_3\right) ,F_{6|2,3}\left( x_6|x_2,x_3\right) \right) \\
\cdot c_{2,5|3,4}\left( F_{2|3,4}\left( x_2|x_3,x_4\right) ,F_{5|3,4}\left(x_5|x_3,x_4\right) \right) \\
\cdot c_{1,5|2,3,4}\left( F_{1|2,3,4}\left( x_1|x_2,x_3,x_4\right),F_{5|2,3,4}\left( x_5|x_2,x_3,x_4\right) \right) \\
\cdot c_{5,6|2,3,4\text{ }}\left( F_{5|2,3,4}\left( x_1|x_2,x_3,x_4\right),F_{6|2,3,4}\left( x_6|x_2,x_3,x_4\right) \right) \\
\cdot c_{1,6|2,3,4,5}\left( F_{1|2,3,4,5}\left( x_1|x_2,x_3,x_4,x_5\right)
,F_{6|2,3,4,5}\left( x_6|x_2,x_3,x_4,x_5\right) \right)
\end{array}
\]

In this part we regard the graph of the Markov network to be known.
So let us suppose that the Markov network, which encodes the conditional
probabilities between the random variables $X_1,\ldots ,X_6$ is given in Figure \ref{fig:fig3}.

\begin{figure}[h!]
\centering\includegraphics[bb=150 640 510 760,width=6cm]{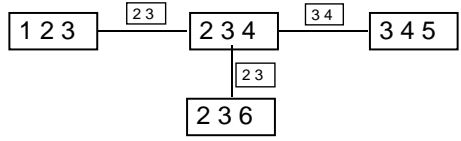}
\caption{$3$-rd order $t$-cherry junction tree}
\label{fig:fig3}
\end{figure}

If the Markov network has the structure in Figure \ref{fig:fig3}, then it is easy to
identify the following conditional independences which are consequences of the Global Markov property:
\[
\begin{array}{lll}
X_1\perp X_4|X_2,X_3; & X_4\perp X_6|X_2,X_3; & X_2\perp X_5|X_3,X_4; \\
X_1\perp X_5|X_2,X_3,X_4; & X_5\perp X_6|X_2,X_3,X_4; & \\
X_1\perp X_6|X_2,X_3,X_4,X_5. &&
\end{array}
\]

Based on the existence of these conditional independences the conditional copula densities associated to the trees $T_3,T_4,T_5$ 
\[
\begin{array}{l}
c_{1,4|2,3}\left( F_{1|23}\left( x_1|x_2,x_3\right) ,F_{4|2,3}\left(x_4|x_2,x_3\right) \right), \\
c_{4,6|2,3}\left( F_{1|23}\left(x_1|x_2,x_3\right) ,F_{4|2,3}\left( x_4|x_2,x_3\right) \right), \\
c_{2,5|3,4}\left( F_{2,5|3,4}\left( x_2|x_3,x_4\right) ,F_{5|3,4}\left(x_5|x_3,x_4\right) \right), \\ c_{1,5|2,3,4}\left( F_{1,5|2,3,4}\left(x_1|x_2,x_3,x_4\right) ,F_{5|2,3,4}\left( x_5|x_2,x_3,x_4\right) \right), \\
c_{5,6|2,3,4\text{ }}\left( F_{5|2,3,4}\left( x_1|x_2,x_3,x_4\right),F_{6|2,3,4}\left( x_6|x_2,x_3,x_4\right) \right), \\
c_{1,6|2,3,4,5}\left( F_{1|2,3,4,5}\left( x_1|x_2,x_3,x_4,x_5\right),
F_{6|2,3,4,5}\left(x_6|x_2,x_3,x_4,x_5\right) \right).
\end{array}
\]
are all equal to 1. We can observe here that a Markov network of the form of a 3-rd order $t$-cherry tree (see Definition \ref{def:def1}) can be expressed as an R-vine truncated at level 3.

This example suggests, that there are $t$-cherry tree probability distributions
which can be represented as a truncated vines.

In the following we suppose the case when the set of separators of the $k$-th order $t$-cherry 
junction tree form a $(k-1)$-th order $t$-cherry junction tree. 
In this case we give an algorithm, which constructs a
Regular-vine structure associated to a $k$-th order $t$-cherry tree
probability distribution (see Definition \ref{def:def3}).

\begin{algorithm}\label{alg:alg1}
Algorithm for obtaining from a $t$-cherry junction tree a truncated Regular-vine construction.

{\it Input}: A $t$-cherry tree structure, ie a set of clusters of size \textit{k},
and the junction tree structure given by the separators.

{\it Output}: A Regular-vine truncated at level $k$.

We obtain recursively an $\left( m-1\right) $ width $t$-cherry junction tree
from a $m$- width $t$-cherry junction tree, for $m=k, \ldots, 1$ as follows:

\begin{itemize}
\item  1. Step. The separators of the $m$-width $t$-cherry tree will be the
clusters in the $(m-1)$-width $t$-cherry tree, which will be linked if between them
is one cluster in the $m$-width $t$-cherry tree, and they are different.

\item  2. Step. The leaf clusters, those clusters which contain a simplicial
node, are transformed into $(m-1)$-width clusters, by deleting a node which is
not simplicial. The cluster obtained in this way will be connected to one of
the clusters obtained in Step 1, which was the separator linked to it in the 
$m$-width $t$-cherry tree junction tree.
\end{itemize}
\end{algorithm}

\begin{definition}\label{def:def9}
The Regular-vine structure obtained from a $t$-cherry tree structure using
Algorithm \ref{alg:alg1} is called \textit{cherry-wine structure}.
\end{definition}

\begin{definition}\label{def:def10}
The joint probability density assigned to a cherry-wine structure is
called \textit{cherry-wine density}, the corresponding copula density is called 
{\it cherry-wine copula density}. 
\end{definition}

\begin{theorem}\label{theo:theo9}
A $t$-cherry copula can be expressed as a cherry-wine copula in the following way:
\[
\begin{array}{ll}
\dfrac{\prod\limits_{K\in {\mathcal C}_{ch}
}c_{K}\left(  \mathbf{u}_{K}\right)  }{\prod\limits_{S\in
\mathcal{S}_{ch}}\left[  c_{S}\left(  \mathbf{u}_{S}\right)  \right]
^{v_{S}-1}}&
=\left[ \prod\limits_{i=1}^{d-1}c_{e_i^1}\left( F_{l_i}\left(
x_{_{li}}\right) ,F_{l_i}\left( x_{_{li}}\right) \right) \right]\\
&\cdot \prod\limits_{i=2}^{k-1}\prod\limits_{e\in
E_i}c_{a_{ij}^k,b_{ij}^k|S_{ij}^k}\left( F_{a_{ij}^k|S_{ij}^k}\left(
x_{a_{ij}^k}|\mathbf{x}_{S_{ij}^k}\right) ,F_{b_{ij}^k|S_{ij}^k}\left(
x_{b_{ij}^k}|\mathbf{x}_{S_{ij}^k}\right) \right).
\end{array}
\]
\end{theorem}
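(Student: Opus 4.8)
The plan is to prove the identity by peeling off the pair-copulas of the cherry-wine tree one tree-level at a time, using a single exact conditional-copula identity together with the nested $t$-cherry structure guaranteed by the standing hypothesis that the separators of each level form a lower-order $t$-cherry tree. First I would record the algebraic fact that for any separator $S$ and any $a,b\notin S$,
\[
c_{a,b|S}\bigl(F_{a|S},F_{b|S}\bigr)=\frac{c_{\{a,b\}\cup S}\,c_{S}}{c_{\{a\}\cup S}\,c_{\{b\}\cup S}}.
\]
This follows directly from $c_{a,b|S}=f_{ab|S}/(f_{a|S}f_{b|S})$ by writing every joint density as its copula density times the product of its univariate margins (the relation $c_{\mathbf{X}_{V}}=f_{\mathbf{X}_{V}}/\prod f_{X_{i}}$ of Subsection \ref{subsec:2.2}); all univariate factors cancel. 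Since Remark \ref{rem:rem1} guarantees that the margins occurring in the $t$-cherry formula are genuine margins of $P(\mathbf{X})$, this identity is available at every cluster and separator.

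Next I would set up an induction on the width $m$, running from $m=k$ down to $m=2$. The induction hypothesis is that the $m$-width $t$-cherry copula, the ratio $\prod_{|K|=m}c_{K}/\prod_{|S|=m-1}c_{S}^{\nu_{S}-1}$, equals the product of the conditional pair-copulas carried by the tree $T_{m-1}$ of the cherry-wine structure, multiplied by the $(m-1)$-width $t$-cherry copula built on the separators (which, by the standing hypothesis, themselves form an $(m-1)$-order $t$-cherry tree, exactly the object produced by Algorithm \ref{alg:alg1}). To prove the step I substitute the displayed identity into the $T_{m-1}$-product. The conditioned pairs come from linked clusters $e_{i},e_{j}$ of $T_{m-1}$ whose union $e_{i}\cup e_{j}$ has size $m$; by the recursive construction in Definition \ref{def:def6} these unions are in bijection with the size-$m$ clusters $K$, so the numerator factors $c_{e_{i}\cup e_{j}}$ assemble into exactly $\prod_{|K|=m}c_{K}$. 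The remaining separator factors $c_{S}$ and linked-cluster factors $c_{e_{i}},c_{e_{j}}$ must regroup, after cancellation against the $(m-1)$-width copula, into the required power $\nu_{S}-1$ on each size-$(m-1)$ separator.

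The crux, and the step I expect to be the main obstacle, is the exponent bookkeeping that makes this regrouping exact. It reduces to the combinatorial identity $\deg_{T_{m-1}}(K')=\nu_{K'}$ for every size-$(m-1)$ cluster $K'$, where $\deg_{T_{m-1}}(K')$ is the degree of $K'$ in the junction tree $T_{m-1}$ and $\nu_{K'}$ counts the size-$m$ clusters containing $K'$. I would establish this by a bijection between the edges of $T_{m-1}$ incident to $K'$ and the size-$m$ clusters containing $K'$: each incident edge links $K'$ to a neighbour $e_{j}$ and produces the distinct cluster $K'\cup e_{j}$, while conversely every size-$m$ cluster containing $K'$ arises this way. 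The running intersection property is exactly what guarantees that the clusters containing $K'$ form a connected subtree and that this correspondence is one-to-one; it is also what forces each separator label $S'$ to occur on precisely $\nu'_{S'}-1$ edges of $T_{m-1}$, which yields the correct intermediate exponents and the full cancellation of all sets of size $2,\dots,k-1$ except where they appear as top separators.

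Once the step is proved, iterating it from $m=k$ down to the trivial $1$-width copula (identically $1$) telescopes the whole $t$-cherry copula into the base pair-copulas of $T_{1}$ together with the conditional pair-copulas of $T_{2},\dots,T_{k-1}$, which is precisely the cherry-wine copula of the statement. As a shortcut one could instead invoke Theorems \ref{theo:theo2} and \ref{theo:theo3} with Remark \ref{rem:rem2}, reading the vanishing higher-level copulas off the conditional independences of the Markov network; but verifying that the resulting truncated R-vine density coincides with the $t$-cherry density ultimately requires the same telescoping, so I would present the direct computation as the primary argument.
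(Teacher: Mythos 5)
The paper states Theorem \ref{theo:theo9} without any proof: it is supported only by the six-variable example around Figures \ref{fig:fig3} and \ref{fig:fig4} and by the implicit chain Theorem \ref{theo:theo1} -- Remark \ref{rem:rem2} -- Theorem \ref{theo:theo3}, which shows that the global Markov property of a $t$-cherry network forces the pair-copulas above level $k$ to equal $1$, so that the R-vine truncates. Your telescoping argument is therefore a genuinely different, and substantially more complete, route: it proves the asserted equality directly rather than only the truncation. Your key identity $c_{a,b|S}=c_{\{a,b\}\cup S}\,c_{S}/\bigl(c_{\{a\}\cup S}\,c_{\{b\}\cup S}\bigr)$ is correct (all univariate marginal densities cancel, and no simplifying assumption on the conditional copulas is needed since the identity holds pointwise in $\mathbf{x}_{S}$), and the level-by-level cancellation is exactly what collapses the junction-tree ratio $\prod_{K}c_{K}/\prod_{S}c_{S}^{\nu_{S}-1}$ onto the pair-copulas of $T_{1},\ldots,T_{k-1}$ produced by Algorithm \ref{alg:alg1}. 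What each approach buys: the paper's implicit route gets the truncated form for free from Theorem \ref{theo:theo1}, but never verifies that the truncated R-vine density coincides with the $t$-cherry copula of Theorem \ref{theo:theo4}; your computation establishes precisely that identification, which is the actual content of the theorem.

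The one step you should not leave as a sketch is the exponent bookkeeping $\deg_{T_{m-1}}(K')=\nu_{K'}$. The inequality $\deg_{T_{m-1}}(K')\le\nu_{K'}$ is immediate from the edge-to-cluster correspondence of Definition \ref{def:def6}, but the converse requires that no size-$m$ cluster of $T_{m}$ contain a third size-$(m-1)$ cluster of $T_{m-1}$ besides the two whose union it is. A counting argument makes the requirement sharp: $\sum_{K'}\deg_{T_{m-1}}(K')=2(d-m+1)$, while $\sum_{K'}\nu_{K'}=\sum_{K}\#\{K'\subset K\}\ge 2(d-m+1)$, so pointwise equality holds if and only if every cluster of $T_{m}$ contains exactly two clusters of $T_{m-1}$. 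This does follow from the running intersection property of the nested cherry trees, as you indicate, but it is the one place where the argument could silently fail for a badly chosen sequence of trees, so it deserves an explicit lemma; similarly, the fact that each separator $S'$ of $T_{m-1}$ labels exactly $\nu'_{S'}-1$ edges uses that all separators at a given level have equal cardinality and hence cannot strictly contain one another. With those two combinatorial facts pinned down, your proof is complete and supplies what the paper omits.
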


\begin{figure}[h!]
\centering\includegraphics[bb=50 70 520 760,width=8cm]{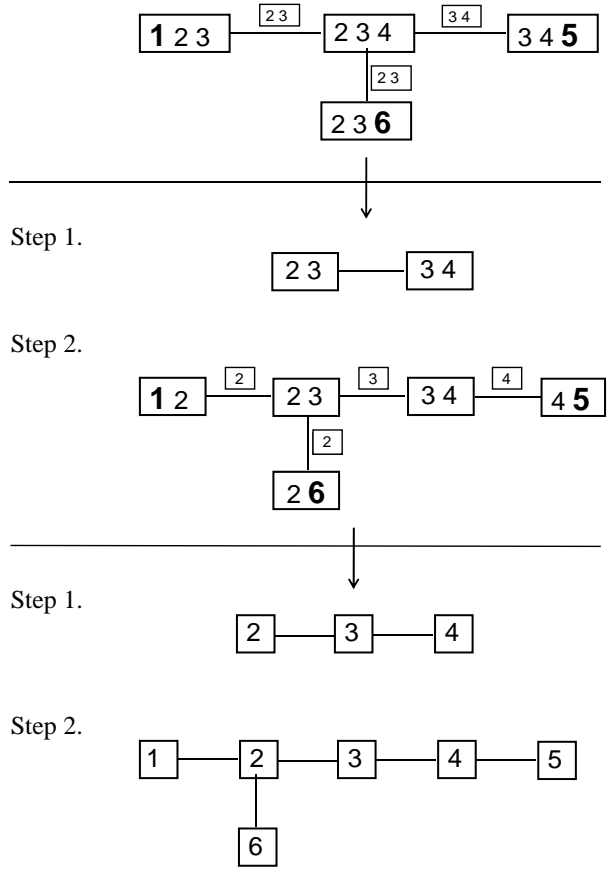}
\caption{Application of Algorithm 1 to a 3-rd order $t$-cherry junction tree in order to obtain a 3-rd order cherry-wine structure}
\label{fig:fig4}
\end{figure}

Example in Figure \ref{fig:fig4} shows how to apply Algorithm \ref{alg:alg1} to a given 3-rd
order $t$-cherry junction tree to obtain a cherry-wine structure.

The cherry wine probability distribution assigned to the 3-rd order cherry-wine 
structure in Figure \ref{fig:fig4} is:

\[
\begin{array}{l}
f\left( x_1,x_2,x_3,x_4,x_5,x_6\right) = \\
= \left( \prod\limits_{i=1}^6f\left(x_i\right) \right) \cdot c_{1,2}\left( F_1\left( x_1\right) ,F_2\left( x_2\right)\right) \cdot c_{2,3}\left( F_2\left( x_2\right) ,F_3\left( x_3\right)\right)  \\
\cdot c_{2,6}\left( F_2\left( x_2\right) ,F_6\left( x_6\right)\right) \cdot c_{3,4}\left( F_3\left( x_3\right) ,F_4\left( x_4\right) \right)
\cdot c_{4,5}\left( F_4\left( x_4\right) ,F_5\left( x_5\right) \right)\\
\cdot c_{1,3|2}\left( F_{1|2}\left( x_1|x_2\right) ,F_{3|2}\left( x_3|x_2\right)\right) 
\cdot c_{3,6|2}\left( F_{3|2}\left( x_3|x_2\right) ,F_{6|2}\left(x_6|x_2\right) \right) \\
\cdot c_{2,4|3}\left( F_{2|3}\left( x_2|x_3\right) ,F_{4|3}\left( x_4|x_3\right)\right) 
\cdot c_{3,5|4}\left( F_{3|4}\left( x_3|x_4\right) ,F_{5|4}\left(x_5|x_4\right) \right) \\
\end{array}
\]

\begin{remark}\label{rem:rem3}
Applying Algorithm \ref{alg:alg1} can result more cherry-wine structures since in Step 2 we can proceed in different directions.
\end{remark}

Starting from the 3-rd order $t$-cherry junction tree given in Figure \ref{fig:fig3} we can obtain $2^{\#\text{leaf clusters}}=8$ 2-nd order $t$-cherry
trees. In the last step there is only one possibility to construct the first tree. We
emphasize here that, if the Markov network has the 3-rd order $t$-cherry tree
structure in Figure \ref{fig:fig3}, than from the 23,040 possible R-vines (see
\cite{Dis10}, \cite{MorCooKur10}) remain only 8.

The question, which arises here is whether the 3-rd order $t$-cherry junction
tree is not a very special structure.

We proved in \cite{SzaKov08} the following theorem by a constructive method:

\begin{theorem}\label{theo:theo5}
Any $k$-width junction tree probability distribution can be expressed as a $k$-th order $t$-cherry tree probability distribution.
\end{theorem}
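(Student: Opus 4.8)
The plan is to reduce the statement to a purely graph-theoretic completion problem and then to check that every completion step leaves the distribution invariant, because it only invokes conditional independences already guaranteed by the global Markov property. First I would recall that a $k$-width junction tree probability distribution coincides with the underlying $P(\mathbf{X})$ that is Markov with respect to the triangulated graph $G$ whose maximal cliques are exactly the clusters of the tree (by the equivalence between acyclic hypergraphs and triangulated graphs recalled in the Preliminaries). Since the largest cluster has $k$ vertices, the clique number of $G$ is $k$ and its treewidth is $k-1$. By Definition \ref{def:def1}, a $k$-th order $t$-cherry tree is precisely a maximal triangulated graph of treewidth $k-1$ (a $(k-1)$-tree in the usual sense): all its maximal cliques have exactly $k$ vertices and all its separators exactly $k-1$. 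Hence it suffices to construct, on the same vertex set $V$, a $k$-th order $t$-cherry tree $G^\ast$ with $G\subseteq G^\ast$, because $G\subseteq G^\ast$ forces every graph separation of $G^\ast$ to be a separation of $G$, so $P$ stays Markov with respect to $G^\ast$ and therefore admits the $t$-cherry factorization of Definition \ref{def:def3}.

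The constructive core is to enlarge every separator of the junction tree to size $k-1$ (equivalently, to add the fill-in edges that turn $G$ into a $(k-1)$-tree). I would process the tree edge by edge: whenever two adjacent clusters $C_1,C_2$ meet in a separator $S=C_1\cap C_2$ with $|S|<k-1$, I replace that single edge by a chain of $k$-clusters interpolating between $C_1$ and $C_2$, obtained by exchanging the vertices of $C_1\setminus S$ for those of $C_2\setminus S$ one at a time; consecutive clusters of the chain then share a separator of size $k-1$. The decisive computation is that the chain does not change $P$: writing its contribution as $\prod_i P(\mathbf{X}_{D_i})\big/\prod_i P(\mathbf{X}_{T_i})$ with $T_i=D_i\cap D_{i+1}$, the product telescopes back to $P(\mathbf{X}_{C_1})P(\mathbf{X}_{C_2})/P(\mathbf{X}_S)$ exactly when, at each step, the vertex dropped from the $C_1$-side is conditionally independent of the vertex added from the $C_2$-side given the current separator $T_i$. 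Since $S\subseteq T_i$ and $S$ separates $C_1\setminus S$ from $C_2\setminus S$ in $G$, the set $T_i$ separates these two vertices as well, so the global Markov property supplies precisely the required independence; each inserted cluster factor then equals the product/quotient of its neighbouring separators, and the enlarged factorization still equals $P$.

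Iterating this over all separators terminates because each step removes a separator of size $<k-1$ and introduces only separators of size $k-1$, so the number of under-sized separators strictly decreases. When none remains, every separator has size exactly $k-1$; since the acyclic-hypergraph property forbids a cluster from containing an adjacent cluster, each separator is a proper subset of both of its clusters, forcing every cluster to have size at least $k$, hence exactly $k$. The resulting structure then satisfies Definition \ref{def:def2}, and because $P$ was preserved at every step, the final factorization is the $t$-cherry tree probability distribution of Definition \ref{def:def3}.

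The main obstacle is the global bookkeeping of the insertions. Locally each chain is routine, but I must choose the order in which vertices are exchanged, and which extra vertices are appended to under-sized clusters, so that (i) no cluster ever exceeds size $k$, (ii) every newly created $(k-1)$-set really is a separator shared by exactly the intended clusters, and (iii) the running intersection property survives after all chains are spliced in, so that the object is a genuine junction tree. Checking that the added vertices extend the connected subtree of each variable without creating a cycle, while simultaneously guaranteeing that every fill-in edge corresponds to a real separation in $G$ (so that the collapse identity of the second paragraph applies), is the delicate point. The cleanest way to discharge it is an induction on the number of vertices using a simplicial vertex $v$ of $G$: one peels off $v$, treats the marginal $P(\mathbf{X}_{V\setminus\{v\}})$ by the inductive hypothesis, and re-attaches $\{v\}\cup N(v)$ as a hypercherry after enlarging $N(v)$ to a $(k-1)$-set of vertices that $N(v)$ separates from $v$, so that $P(X_v\mid\mathbf{X}_{N(v)})$ is unchanged by the enlargement.
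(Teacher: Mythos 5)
The paper does not actually prove Theorem~\ref{theo:theo5}: it only records that the result ``was proved by a constructive method'' in \cite{SzaKov08}, so your argument can only be measured against that cited construction, which likewise proceeds by enlarging clusters and separators until they have uniform sizes $k$ and $k-1$. Your overall strategy is correct and of the same constructive flavour: identify the $k$-th order $t$-cherry tree of Definition~\ref{def:def1} with a $(k-1)$-tree, embed the triangulated graph $G$ of the given $k$-width junction tree into a $(k-1)$-tree $G^{\ast}$ on the same vertex set, and use the fact that every separation of $G^{\ast}$ is a separation of $G$, so the distribution stays globally Markov and factorizes over the new clusters (whose marginals are now marginals of the junction tree pd itself, not of the original $P$ --- worth stating explicitly). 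Two caveats. First, the edge-by-edge chain-splicing construction of your second and third paragraphs is, as you concede, not watertight: when $|C_1|<k$ or $|C_2|<k$ the interpolating $k$-clusters cannot be assembled from $C_1\cup C_2$ alone, extra vertices must be imported from elsewhere in the tree, and it is precisely there that the running intersection property and the telescoping identity can fail; this version should not be offered as the proof. Second, the simplicial-vertex induction you sketch at the end is the complete and correct argument and should be promoted to the main line: peeling a simplicial vertex $v$ leaves a $k$-width junction tree pd on $G-v$; the inductive $(k-1)$-tree contains $G-v$, so the clique $N(v)$ (of size at most $k-1$) extends to a $(k-1)$-subset $S'$ of one of its $k$-cliques; and $P\left(X_v\mid\mathbf{X}_{N(v)}\right)=P\left(X_v\mid\mathbf{X}_{S'}\right)$ because $N(v)$ separates $v$ from $S'\setminus N(v)$ in $G$, so attaching the hypercherry $\left\{\{v\},S'\right\}$ preserves the distribution. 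With that reorganization your proof is sound.
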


\begin{remark}\label{rem:rem4} 
There exists more expressions for the $t$-cherry probability distribution, but much smaller
number than Regular-vines, and has the advantage of exploiting the conditional
independences.
\end{remark}

\section{A model selection for special R-vines called cherry-wines}
\label{sec:4}

Now we suppose to have a sample data set. Starting from this dataset we want
to find a good fitting probability distribution. The main idea is fitting
the copula function and the marginal probability distribution separately.
Using pair-vine constructions we will express the joint density function only by
marginal distributions and bivariate (pair)-copulas. First we will search
for a good fitting regular vine structure. As it is shown in \cite{MorCooKur10} the number
of possible regular vines grows exponentially with the number of variables. So
the basic idea is searching through truncated R-vine copulas at a given
level $k$.

Full inference for pair-copula decomposition should in principle consider
three elements \cite{Aasetal09}:

\begin{itemize}
\item  The selection of a specific factorization;
\item  The choice of pair-copula types;
\item  The estimation of parameters of the chosen pair-copulas.
\end{itemize}

This paper is concerned with finding of factorization which exploits some of the conditional independences between the random variables.

There are many papers dealing with selecting specific
Regular-vines as C-vine or D-vine see for example \cite{Aasetal09}.

The main idea of our approach is finding a $t$-cherry copula and then transforming 
it by Algorithm 1 into a cherry-wine copula, which depends just
on pair-copulas. So we will start at a given level $k$, search for the best
fitting $t$-cherry copula to the sample data and find then the factorization
which results the chosen $k$-th order $t$-cherry tree.

\subsection{The Sample derivated copula }

The empirical probability distribution of the sample data is a discrete
multivariate probability distribution. If this data is drawn i.i.d from a
continuous joint probability distribution all realizations are different
vectors. So the joint probability distribution is uniform. The range of each
random variable is equal to the sample size $N$.

As it is shown in \cite{KoSza10a} we first make a partition of the range of
each random variable involved. The intervals obtained contain the same
number of data. We introduced a special type of copula called {\it sample
derivated copula}.

We denote the set of the values of $X_{i}$ in the sample by $\Lambda _{i}$. This set
contains $N$ values, for each random variable. The theoretical range of the
continuous random variable $X_{i}$ will be denoted by $\overline{\Lambda}_{i}$. For
every $i$ we denote by $\lambda _{i}^{m}=\min \overline{\Lambda}_{i}\in R$
and by $\lambda _{i}^{M}=\max \overline{\Lambda}_{i}\in R$. We suppose for
simplicity that $\min \overline{\Lambda}_{i}\neq $ $\min \Lambda _{i}$ and $%
\max \overline{\Lambda}_{i}\neq \max \Lambda _{i}$ For each random variable
$X_{i}$ we define a partition of $\Lambda _{i}$ by $\mathcal{P}_{i}=\left\{ x_{0}^{p_{i}}=\lambda
_{i}^{m},x_{1}^{p_{i}},\ldots ,x_{m_{i}-1}^{p_{i}},x_{m_{i}}^{p_{i}}=\lambda
_{i}^{M}\right\} $ with the following properties:

\begin{itemize}
\item  For each random variable $X_{i}$, each interval $\left(
x_{j-1}^{p_{i}};x_{j}^{p_{i}}\right] ,j=1,\ldots, m_{i}$ contains a given $%
n_{i}=\dfrac{N}{m_{i}}\in N$ number of values from the set $\Lambda _{i}$.

\item  Each $x_{j}^{p_{i}}\in \Lambda _{i},\ j=1,\ldots, m_{i}-1$.
\end{itemize}

The  partition with the above properties will be called uniform partition.
We denote by ${\mathcal P}$ the set of partitions $\{{\mathcal P}_{1},\ldots, {\mathcal P}_{n}\}$.

Let be $\widetilde{X_{i}}$ the categorical random variable associated to the
random variable $X_{i}$:
\[
P(\widetilde{X_{i}} \in \left( x_{j-1}^{p_{i}};x_{j}^{p_{i}}\right])=\dfrac{1}{m_{i}}, j=1,\ldots , m_{i}.
\]
We assign to each $x^{i}\in \left( x_{j-1}^{p_{i}};x_{j}^{p_{i}}\right] $
the number $u_{j}^{i}=\dfrac{j}{m_{i}}, j=0,\ldots, m_{i}$. Obviously
$u_{0}^{i}=0$ and $u_{m_{i}}^{i}=1$. Let 
$\widetilde{\Lambda }_{i}=\left\{ u_{j}^{i}|j=0,\ldots , m_{i}\right\} $.
So we can define the following discrete uniform random variables:
\[
\widetilde{U}_{i}=
\left(
\begin{array}{ccccc}
u_{0}^{i} & u_{1}^{i}      & \ldots & u_{m_{i}-1}^{i} & u_{m_{i}}^{i}   \vspace{3mm}\\
0         &\dfrac{1}{m_{i}} & \ldots & \dfrac{1}{m_{i}} & \dfrac{1}{m_{i}}
\end{array}
\right),
i=1,\ldots, d.
\] 
Now we transform the sample using the above assignment. 
We denote the transformed sample by ${\mathcal T}$. 

\begin{definition}\label{def:def11} 
The function 
$\widetilde{c}:\prod\limits_{i=1}^{d}\widetilde{\Lambda}_{i}\rightarrow R$ 
defined by
\[
\left( u_{k_{1}}^{1},\ldots ,u_{k_{d}}^{d}\right) \mapsto 
\widetilde{c}\left(
u_{k_{1}}^{1},\ldots ,u_{k_{d}}^{d}\right) =\dfrac{\#\left\{(u_{k_{1}}^{1},\ldots ,u_{k_{d}}^{d})\in {\mathcal T}\right\}}{N},
k_{i}=0,\ldots, m_{i}  
\]
will be called {\it sample derivated copula density}.
\end{definition}

In Remark 6 of the paper \cite{KoSza10a} we proved also, that partitioning in this way
the information content of the joint probability distribution depends just
on the sample derivated copula.

The sample derivated copula can be treated as a discrete multivariate
probability distribution. One of its advantages is that the range of the
variables involved are significantly decreased.

Now using the greedy Sz\'{a}ntai-Kov\'{a}cs algorithm introduced in
\cite{SzaKov10a} we find the $k$-th order $t$-cherry copula. The goodness of
fit to the data is quantified by Kullback-Leibler divergence. We emphasize here that
finding the best fitting t-cherry copula is an NP-hard problem 
for $k>2$, but there are cases, when the greedy algorithm finds the
optimal solution, see \cite{SzaKov10a}.

\subsection{The Sz\'{a}ntai-Kov\'{a}cs greedy algorithm}

We present here the algorithm introduced in \cite{SzaKov10a}.

The following theorem regarded to discrete probability distributions given
in \cite{KoSza10a}.

In \cite{SzaKov08} the authors give the following theorem.

\begin{theorem}
\label{theo:2}
The Kullback-Leibler divergence between the true $P(\mathbf{X})$ and the
approximation given by the $k$-width junction tree probability distribution $P(\mathbf{X}_{J})$, determined by the set
of clusters $\mathcal{C}$ and the set of separators $\mathcal{S}$ is :
\begin{equation}
\label{equ:equ1}
\begin{array}{rcl}
KL\left( P\left( \mathbf{X}\right) ,P_{J}\left( \mathbf{X}\right) \right)
& = & -H\left( \mathbf{X}\right) -\left( \sum\limits_{C\in \mathcal{C}}I\left(
\mathbf{X}_C\right) -\sum\limits_{S\in \mathcal{S}}\left( \nu _S-1\right) I\left(
\mathbf{X}_S\right) \right) \\
& + & \sum\limits_{i=1}^dH\left( X_i\right),
\end{array}
\end{equation}
where $I(\mathbf{X}_{C})=\sum\limits_{i\in \mathcal{C}} H\left(X_i\right) - H\left(
\mathbf{X}_{C}\right)$ represents the information content of the random vector $\mathbf{X}_{C}$
and similarly $I(\mathbf{X}_{S})=\sum\limits_{i\in \mathcal{S}} H\left(X_i\right) - H\left(
\mathbf{X}_{S}\right)$ represents the information content of the random vector $\mathbf{X}_{S}$.
\end{theorem}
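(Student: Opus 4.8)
The plan is to compute the divergence directly from its definition and to split it into an information-theoretic part and a purely combinatorial part. Starting from $KL(P,P_J)=E_P\!\left[\log\frac{P(\mathbf{X})}{P_J(\mathbf{X})}\right]$ and inserting the junction-tree factorization, I would write
\[
\log P_J(\mathbf{X})=\sum_{C\in\mathcal{C}}\log P(\mathbf{X}_C)-\sum_{S\in\mathcal{S}}(\nu_S-1)\log P(\mathbf{X}_S),
\]
and distribute the expectation term by term using linearity. The only analytic input needed is that each factor $P(\mathbf{X}_C)$ depends solely on the subvector $\mathbf{X}_C$ and, by Remark \ref{rem:rem1}, is a genuine marginal of $P$; hence $E_P[\log P(\mathbf{X}_C)]=-H(\mathbf{X}_C)$, and likewise $E_P[\log P(\mathbf{X}_S)]=-H(\mathbf{X}_S)$ and $E_P[\log P(\mathbf{X})]=-H(\mathbf{X})$. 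This already produces the intermediate identity
\[
KL(P,P_J)=-H(\mathbf{X})+\sum_{C\in\mathcal{C}}H(\mathbf{X}_C)-\sum_{S\in\mathcal{S}}(\nu_S-1)H(\mathbf{X}_S).
\]

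Next I would substitute the definition of information content in the equivalent form $H(\mathbf{X}_C)=\sum_{i\in C}H(X_i)-I(\mathbf{X}_C)$, and similarly for each separator $S$, into both sums. This splits the right-hand side into an information part, which is exactly $-\bigl(\sum_{C}I(\mathbf{X}_C)-\sum_{S}(\nu_S-1)I(\mathbf{X}_S)\bigr)$ as claimed, and a residual built only from univariate entropies,
\[
R=\sum_{C\in\mathcal{C}}\sum_{i\in C}H(X_i)-\sum_{S\in\mathcal{S}}(\nu_S-1)\sum_{i\in S}H(X_i).
\]
It then remains to prove the reduction $R=\sum_{i=1}^{d}H(X_i)$, which is where the combinatorial structure of the junction tree enters.

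The heart of the argument, and the step I expect to be the main obstacle, is this collapse of $R$. I would interchange the order of summation and determine, for each fixed vertex $i$, the net coefficient of $H(X_i)$: it is the number of clusters containing $i$ minus the number of separators containing $i$, each separator $S$ counted with multiplicity $\nu_S-1$ (equivalently, counted once per edge of the junction tree). Here I would invoke the running intersection property in the form: the clusters containing a fixed vertex $i$ induce a connected subtree of the junction tree, and an edge of the tree carries $i$ in its separator precisely when both of its endpoint clusters contain $i$, i.e.\ precisely when that edge lies inside this subtree. Since a tree on $n_i$ nodes has exactly $n_i-1$ edges, the net coefficient of $H(X_i)$ equals $n_i-(n_i-1)=1$ for every $i$, so $R=\sum_{i=1}^{d}H(X_i)$. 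The one bookkeeping point to verify is that grouping the tree edges by their separator set and weighting each distinct separator $S$ by $\nu_S-1$ reproduces the per-edge count; for the uniform-width ($t$-cherry) junction trees used throughout, the clusters containing a given $(k-1)$-separator $S$ span a subtree whose $\nu_S-1$ edges all carry exactly $S$, so the two counts agree. Combining the information part with the collapsed residual then yields formula (\ref{equ:equ1}).
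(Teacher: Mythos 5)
Your proposal is correct and follows essentially the same route as the paper's own derivation (carried out for the continuous analogue in the proof of Theorem~\ref{theo:theo8}): expand $\log P_J$ over clusters and separators, take expectations under $P$ to obtain marginal entropies, and regroup into information contents by inserting the product of univariate marginals. The only difference is that where the paper simply asserts the counting identity (``each variable belongs once more to the clusters than to the separators''), you actually prove it from the running intersection property via the connected-subtree argument, which closes the one gap the paper leaves open.
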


In Formula (\ref{equ:equ1}) $-H\left( \mathbf{X}\right) +\sum\limits_{i=1}^dH\left(
X_i\right) =I\left( \mathbf{X}\right) $ is independent from the structure of
the junction tree. It is easy to see that minimizing the Kullback-Leibler
divergence means maximizing $\sum\limits_{C\in \mathcal{C}}I\left(
\mathbf{X}_C\right) -\sum\limits_{S\in \mathcal{S}}\left( \nu _S-1\right) I\left(
\mathbf{X}_S\right) $. We call this sum as \textit{weight of the junction tree pd}. As larger
this weight is, as better fits the approximation associated to the junction
tree pd to the true probability distribution. It is well known that $KL=0$ if $P(\mathbf{X})=P_{J}\left( \mathbf{X}\right)$.

\begin{definition}
\label{def:def4} 
We define the following concepts:

\begin{itemize}
\item  the search space:

\vspace{2mm}
$E=\left\{ \chi _{i_k\left( i_1,\ldots ,i_{k-1}\right) }=\left\{ \left\{
X_{i_k}\right\} ,\left\{ X_{i_1},\ldots ,X_{i_{k-1}}\right\} \right\}
|X_{i_1},\ldots ,X_{i_{k-1}},X_{i_k}\in X\right\} $,
\vspace{2mm}

\item  the independence set:

\vspace{2mm}
$\mathcal{F}=\phi \cup \left\{ t-\mbox{cherry junction tree structure}\right\} $,
\vspace{2mm}

\item  the weight function:

\vspace{2mm}
$w:E\rightarrow R\qquad w\left( \chi _{i_k\left( i_1,\ldots ,i_{k-1}\right)
}\right) =I\left( X_{i_1},\ldots ,X_{i_{k-1}},X_{i_k}\right) -I\left(
X_{i_1},\ldots ,X_{i_{k-1}}\right) $.
\end{itemize}
\end{definition}

\begin{algorithm}
\label{alg:alg2}
Sz\'{a}ntai-Kov\'{a}cs's greedy algorithm.

\textit{Input}: Elements of E and their weights which can be calculated
based on the $k$-th order marginal probability distributions.

\textit{Output}: set A which contains the clusters of the $k$-th order $t$%
-cherry juntion tree pd and the wheight of the $k$-th order $t$-cherry junction
tree pd.

\textit{The algorithm}:

$A:=\phi $

Sort $E$ into monotonically decreasing order by wheight $w$;

Choose $x={\arg \max }_{x\in E}\left( w\left( x\right) \right)$;

\qquad let $A:=A\cup \left\{ x\right\} ;\quad E:=E\backslash \left\{
x\right\} ;\quad w:=I\left( x\right)$;

Do for each $x\in E$ taken in monotonically decreasing order

\qquad if $A\cup \left\{ x\right\} \in \mathcal{F}$ then let $A:=A\cup \left\{ x\right\}
;\quad E:=E\backslash \left\{ x\right\} ;\quad w:=w+w\left( x\right) ;$

\qquad if the union of subsets of $A$ is $X$, then Stop;

\qquad else take the next element of $E$.
\end{algorithm}

\subsection{Building the cherry-wine associated to the $t$-cherry tree.}

We calculate the $k$-th order marginal pd from the sample derivated copula.
Using their information content we can define the weights of the elements of
the search space E.

Applying Sz\'{a}ntai-Kov\'{a}cs'a algorithm we obtain a good fitting $t$-cherry tree copula. 

We assign to this the $k$-th order $t$-cherry tree the $T_k$ tree of a regular
vine. Applying now Algorithm \ref{alg:alg1} we can find the corresponding cherry-wine
structure, and using this the expression of the cherry-wine copula density
expressed by pair-copulas.

Now comes the next step the choice of pair-copula types and the estimation
of parameters. For choosing pair copulas we have a large amount of
copula-families, with different properties, tail-dependencies see in \cite{Joe97}, 
\cite{GenRiv93} and \cite{Ne99}. 

\section{Properties of the best fitting cherry-wine probability density,
and cherry-wine copula density}
\label{sec:5}

In this section we discuss the properties of
the best fitting cherry-wine probability density and corresponding copula
density, which are associated to an R-vine truncated at level $k$ from a theoretical point of view. 

We will use the following notations:

\begin{itemize}
\item   $f_{V}\left( \mathbf{x}_{V}\right) $ denotes the joint probability
density of $\mathbf{X}_{V}$, $f_K\left( \mathbf{x}_K\right) $ is the marginal
density of $f_{V}\left( \mathbf{x}_{V}\right) $, where $K\subset V$.

\item   $c_{V}\left( \mathbf{u}_{V}\right) $ denotes the joint copula density
associated to the joint probability density $f_{V}\left( \mathbf{x}_{V}\right) $%
, $c_K\left( \mathbf{u}_K\right) $ is its marginal density which is the
copula density corresponding to $f_K\left( \mathbf{x}_K\right) $

\item  $\widehat{f}_{V_{\mathcal{CS}}}$ denotes the joint $k$-th order
cherry-wine density, associated to a $k$-th order $t$-cherry junction tree
with cluster $\mathcal{C}$ and separator set $\mathcal{S}$ , given by:
\end{itemize}

\begin{equation}\label{eq:eq3}
\widehat{f}_{V_{\mathcal{CS}}}\left( \mathbf{x}_{V}\right) =\dfrac{\prod\limits_{K\in \mathcal{C}%
_{ch}}f_K\left( \mathbf{x}_K\right) }{\prod\limits_{S\in \mathcal{S}%
_{ch}}\left( f_S\left( \mathbf{x}_S\right) \right) ^{\nu _S-1}},
\end{equation}
where $\nu_S$ is the number of clusters which contain $S$.

\begin{theorem}\label{theo:theo8}
The Kullback-Leibler divergence between $f_{V}\left( \mathbf{x}_{V}\right) $ and
the approximating probability density assigned to the cherry-wine $\widehat{%
f}_{V_{\mathcal{CS}}}$, is given by the formula:

\begin{equation}\label{eq:eq4}
KL\left( \widehat{f}_{V_{\mathcal{CS}}}\left( \mathbf{x}_{V}\right) ,f_{V}\left( 
\mathbf{x}_{V}\right) \right) =I\left( \mathbf{X}_{V}\right) -\left[
\sum\limits_{K\in \mathcal{C}_{ch}}I\left( \mathbf{X}_K\right) -\sum_{S\in 
\mathcal{S}_{ch}}\left( \nu _S-1\right) I\left( \mathbf{X}_S\right) \right].
\end{equation}
\end{theorem}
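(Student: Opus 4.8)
The plan is to unfold the Kullback--Leibler divergence directly from its definition and then recognize the information-theoretic quantities that appear. Recall that for two densities the divergence is $KL(\widehat f, f) = \int \widehat f \log \frac{\widehat f}{f}$, but here the cherry-wine density $\widehat f_{V_{\mathcal{CS}}}$ is constructed so that all its cluster and separator marginals coincide with those of $f_V$ (Remark~\ref{rem:rem1}). This matching of marginals is the structural fact I would exploit first: it lets me replace the expectation with respect to $\widehat f$ by an expectation with respect to $f_V$ whenever the integrand depends only on a cluster- or separator-marginal, because the two agree on exactly those margins.

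Concretely, I would first substitute the product-quotient form (\ref{eq:eq3}) for $\widehat f_{V_{\mathcal{CS}}}$ into $\log \widehat f_{V_{\mathcal{CS}}}$, turning the logarithm of the product/quotient into
\[
\log \widehat f_{V_{\mathcal{CS}}}(\mathbf{x}_V) = \sum_{K\in\mathcal{C}_{ch}} \log f_K(\mathbf{x}_K) - \sum_{S\in\mathcal{S}_{ch}} (\nu_S-1)\log f_S(\mathbf{x}_S).
\]
Then $KL = \int f_V \log\frac{\widehat f_{V_{\mathcal{CS}}}}{f_V}$ splits into three groups of terms: the cluster terms $\int f_V \log f_K$, the separator terms $-(\nu_S-1)\int f_V \log f_S$, and the term $-\int f_V \log f_V$. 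Each of the first two integrals depends only on the corresponding marginal of $f_V$, so $\int f_V \log f_K = \int f_K \log f_K = -H(\mathbf{X}_K)$, and likewise for the separators; the last term is $+H(\mathbf{X}_V)$. Collecting these gives $KL = H(\mathbf{X}_V) - \sum_K H(\mathbf{X}_K) + \sum_S (\nu_S-1) H(\mathbf{X}_S)$.

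The final step is purely algebraic: I would rewrite each entropy via the definition of information content, $I(\mathbf{X}_K) = \sum_{i\in K} H(X_i) - H(\mathbf{X}_K)$ (as used in Theorem~\ref{theo:2}), and similarly $I(\mathbf{X}_V) = \sum_{i\in V}H(X_i) - H(\mathbf{X}_V)$. The junction-tree running-intersection structure guarantees the counting identity $\sum_{K\in\mathcal{C}_{ch}} \mathbf{1}[i\in K] - \sum_{S\in\mathcal{S}_{ch}}(\nu_S-1)\mathbf{1}[i\in S] = 1$ for every vertex $i$, so the single-variable entropies $\sum_i H(X_i)$ that enter through the $I$-substitutions cancel cleanly and reassemble into $I(\mathbf{X}_V)$ on one side and $\sum_K I(\mathbf{X}_K) - \sum_S(\nu_S-1)I(\mathbf{X}_S)$ on the other, yielding exactly (\ref{eq:eq4}). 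This is essentially Theorem~\ref{theo:2} specialized to the $t$-cherry case, and indeed I could alternatively simply invoke Theorem~\ref{theo:2} with $\mathcal{C}=\mathcal{C}_{ch}$, $\mathcal{S}=\mathcal{S}_{ch}$ and drop the term $-H(\mathbf{X}) + \sum_i H(X_i)$ into $I(\mathbf{X}_V)$.

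The main obstacle is not analytic but bookkeeping: one must verify that the weighted vertex-incidence count over clusters and separators equals $1$ for each vertex, which is precisely where the $t$-cherry junction-tree topology (running intersection, with $\nu_S$ the number of clusters meeting at separator $S$) is needed. I would justify this by induction on the recursive construction of the $t$-cherry tree in Definition~\ref{def:def1}: adding a hypercherry introduces one new cluster, one new separator with multiplicity increased by one, and the telescoping keeps the per-vertex count invariant. Once that identity is in hand, the cancellation of the $\sum_i H(X_i)$ contributions is immediate and the stated formula follows.
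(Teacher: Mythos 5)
Your overall strategy --- expand $\log\widehat f_{V_{\mathcal{CS}}}$ using (\ref{eq:eq3}), reduce each resulting integral to a lower-dimensional entropy via marginal consistency, and reassemble into information contents using a per-vertex counting identity --- is exactly the route the paper takes; the paper merely adds and subtracts $\int f_V\log\prod_{K}\prod_{i\in K}f_i(x_i)\,d\mathbf{x}$ so as to produce the $I(\cdot)$ terms directly instead of passing through entropies and then converting. However, as written your derivation has a genuine direction/sign problem. The quantity the theorem computes is $\int f_V\log\bigl(f_V/\widehat f_{V_{\mathcal{CS}}}\bigr)$: the paper's notation $KL(\widehat f,f_V)$ lists the approximation first, but the integrand in its proof is $f_V\log(f_V/\widehat f)$, i.e.\ $D(f_V\|\widehat f)$ in the usual convention. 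You instead start from $KL(\widehat f,f)=\int\widehat f\log(\widehat f/f)$ and argue that the base measure may be swapped from $\widehat f$ to $f_V$ because the cluster and separator marginals agree. That swap is valid for the terms involving $\log f_K$ and $\log f_S$, but not for the term $\int\widehat f\log f_V$: $\log f_V$ depends on all coordinates jointly, so $\int\widehat f\log f_V\neq\int f_V\log f_V$ in general, and $D(\widehat f\|f_V)$ is \emph{not} given by (\ref{eq:eq4}).

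Having switched, what you actually compute is $\int f_V\log(\widehat f/f_V)=-D(f_V\|\widehat f)$, so your intermediate result $KL=H(\mathbf{X}_V)-\sum_K H(\mathbf{X}_K)+\sum_S(\nu_S-1)H(\mathbf{X}_S)$ is the negative of the correct value (note it is $\le 0$, which a Kullback--Leibler divergence cannot be), and substituting the definitions of $I$ into it yields the negative of (\ref{eq:eq4}) rather than (\ref{eq:eq4}) itself. The repair is one line: compute $\int f_V\log(f_V/\widehat f)=-H(\mathbf{X}_V)+\sum_K H(\mathbf{X}_K)-\sum_S(\nu_S-1)H(\mathbf{X}_S)$, after which your counting identity $\sum_{K}\mathbf{1}[i\in K]-\sum_S(\nu_S-1)\mathbf{1}[i\in S]=1$ --- which you justify correctly by induction on the $t$-cherry construction, and which the paper invokes as ``each variable belongs once more to the clusters than to the separators'' --- delivers exactly (\ref{eq:eq4}). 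The remaining ingredients, namely $\int f_V\log f_K=\int f_K\log f_K$ and the reassembly into information contents, are sound and coincide with the paper's argument; your suggested shortcut of citing Theorem~\ref{theo:2} would additionally need the remark that that statement is given for the discrete case, its continuous adaptation being precisely what the paper's proof carries out.
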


\begin{proof}
\[
\begin{array}{l}
KL\left( \widehat{f}_{V_{\mathcal{CS}}}\left( \mathbf{x}_{V}\right) ,f_{V}\left( \mathbf{x}_{V}\right) \right)
=\int\limits_{R^d}f_{V}\left( \mathbf{x}\right) \log_2\dfrac{f_{V}\left( \mathbf{x}\right) }{\widehat{f}_{V_{\mathcal{CS}}}\left( 
\mathbf{x}\right)} d\mathbf{x}\vspace{3mm}\\
=\int\limits_{R^d}f_{V}\left( \mathbf{x}\right)
\log_2f_{V}\left( \mathbf{x}\right) d\mathbf{x}-\int\limits_{R^d}f_{V}\left( 
\mathbf{x}\right) \log_2\widehat{f}_{V_{\mathcal{CS}}}\left( \mathbf{x}%
\right) d\mathbf{x}\\
=-H\left( \mathbf{X}\right) -\int\limits_{R^d}f_{V}\left( \mathbf{x}\right)
\log_2\dfrac{\prod\limits_{K\in \mathcal{C}}f_K\left( \mathbf{x}_K\right) }{%
\prod\limits_{S\in \mathcal{S}}\left( f_S\left( \mathbf{x}_S\right) \right)
^{\nu _S-1}}d\mathbf{x}\\
=-H\left( \mathbf{X}\right)-\int\limits_{R^d}f_{V}\left( \mathbf{x}\right) \left[ \log_2\prod\limits_{K\in 
\mathcal{C}}f_K\left( \mathbf{x}_K\right) -\log_2\prod\limits_{S\in \mathcal{S%
}}\left( f_S\left( \mathbf{x}_S\right) \right) ^{\nu _S-1}\right] d\mathbf{x}\vspace{3mm}\\
=-H\left( \mathbf{X}\right) -\int\limits_{R^d}f_{V}\left( \mathbf{x}\right)
\log_2\prod\limits_{K\in \mathcal{C}}f_K\left( \mathbf{x}_K\right) d\mathbf{x+%
}\int\limits_{R^d}f_{V}\left( \mathbf{x}\right) \log_2\prod\limits_{S\in 
\mathcal{S}}\left( f_S\left( \mathbf{x}_S\right) \right) ^{\nu _S-1}d\mathbf{x}.
\end{array}
\]
Since $\bigcup\limits_{K\in \mathcal{C}}K=V$ and each variable belongs
once more to the clusters than to the separators,  by adding and 
substracting 
\[
\int\limits_{R^d}f_{V}\left( \mathbf{x}\right) \log_2\prod\limits_{K\in 
\mathcal{C}}\prod\limits_{i\in K}f_i\left( x_i\right) d\mathbf{x}
\]
we obtain
\[
\begin{array}{l}
KL\left( \widehat{f}_{V_{\mathcal{CS}}}\left( \mathbf{x}\right) ,f_{V}\left(\mathbf{x}\right) \right) 
= -H\left( \mathbf{X}\right)-\int\limits_{R^d}f_{V}\left( \mathbf{x}\right)\log_2\dfrac{\prod\limits_{K\in \mathcal{C}}
f_K\left( \mathbf{x}_K\right) }{\prod\limits_{K\in \mathcal{C}}\prod\limits_{i\in K}f_i\left( x_i\right) }d\mathbf{x}\\
+\int\limits_{R^d}f_{V}\left( \mathbf{x}\right) 
\log_2\dfrac{\prod\limits_{S\in \mathcal{S}}\left[ f_S\left( \mathbf{x}_{_S}\right)\right] ^{\nu _S-1}}
{\prod\limits_{S\in \mathcal{S}}\left[\prod\limits_{i\in K}f_i\left( x_i\right) \right] ^{\nu _S-1}}d\mathbf{x}
-\int\limits_{R^d}f_{V}\left( \mathbf{x}\right)
\log_2\prod\limits_{i=1}^df_i\left( x_i\right) d\mathbf{x}\\
=-H\left( \mathbf{X}\right) -\int\limits_{R^d}f_{V}\left( \mathbf{x}\right)
\sum\limits_{K\in \mathcal{C}}\log_2\dfrac{f_K\left( \mathbf{x}_K\right) }{%
\prod\limits_{i\in K}f_i\left( x_i\right) }d\mathbf{x}\\
+\int\limits_{R^d}f_{V}\left( \mathbf{x}\right) \sum\limits_{S\in \mathcal{S}%
}\log_2\dfrac{\left[ f_S\left( \mathbf{x}_{_S}\right) \right] ^{\nu _S-1}}{%
\left[ \prod\limits_{i\in S}f_i\left( x_i\right) \right] ^{\nu _S-1}}d%
\mathbf{x}-\int\limits_{R^d}f_{V}\left( \mathbf{x}\right)
\sum\limits_{i=1}^d \log_2f_i\left( x_i\right) d\mathbf{x}
\end{array}
\]
Since $f_K\left( \mathbf{x}_K\right) ,$ $f_S\left( \mathbf{x}_{_S}\right) ,$ 
$f_i\left( x_i\right) $ are consistent marginals of $f_{V}\left( \mathbf{x}%
\right) $ we have the following relations:
\begin{equation}\label{eq:eq5}
\begin{array}{l}
\int\limits_{R^d}f_{V}\left( \mathbf{x}\right) \sum\limits_{K\in 
\mathcal{C}}\log_2\dfrac{f_K\left( \mathbf{x}_K\right) }{\prod\limits_{i\in
K}f_i\left( x_i\right) }d\mathbf{x}\\
=\sum\limits_{K\in \mathcal{C}%
}^{}\int\limits_{R^k}f_K\left( \mathbf{x}\right) \log_2\dfrac{f_K\left( 
\mathbf{x}_K\right) }{\prod\limits_{i\in K}f_i\left( x_i\right) }d\mathbf{x}%
_k\mathbf{=}\sum\limits_{K\in \mathcal{C}}^{}I\left( \mathbf{X}_K\right)
\end{array} 
\end{equation}
\begin{equation}\label{eq:eq6}
\begin{array}{l}
\int\limits_{R^d}f_{V}\left( \mathbf{x}\right) \sum\limits_{S\in 
\mathcal{S}}\log_2\dfrac{\left[ f_S\left( \mathbf{x}_{_S}\right) \right]
^{\nu _S-1}}{\prod\limits_{i\in S}f_i\left( x_i\right) }d\mathbf{x}\\
=\sum\limits_{S\in \mathcal{S}}^{}\left( \nu _S-1\right)
\int\limits_{R^{k-1}}f_S\left( \mathbf{x}\right) \log_2\dfrac{\left[ f_S\left( 
\mathbf{x}_{_S}\right) \right] }{\prod\limits_{i\in S}f_i\left( x_i\right) }d%
\mathbf{x}_S=\sum\limits_{S\in \mathcal{C}}^{}\left( \nu
_S-1\right) I\left( \mathbf{X}_S\right) 
\end{array}
\end{equation}
\begin{equation}\label{eq:eq7}
\begin{array}{l}
-\int\limits_{R^d}f_{V}\left( \mathbf{x}\right)
\sum\limits_{i=1}^d\log_2f_i\left( x_i\right)\\
=\sum\limits_{i=1}^d-\int\limits_{-\infty }^\infty f_i\left( \mathbf{x}%
_i\right) \log_2f_i\left( x_i\right) dx_i=\sum\limits_{i=1}^dH\left(
X_i\right) 
\end{array}
\end{equation}
where $I\left( \mathbf{X}_K\right) $, $I\left( \mathbf{X}_S\right) $ are the
information contents (see \cite{CoTo91}) of the $\mathbf{X}_K$ and $\mathbf{X}_S$
corresponding to the index set $K\in \mathcal{C}$ and $S\in \mathcal{S}$.

Taking into account relations (\ref{eq:eq5}), (\ref{eq:eq6}) and (\ref{eq:eq7}) we obtain:
\[
KL\left( \widehat{f}_{V_{\mathcal{CS}}}\left( \mathbf{x}\right) ,f_{V}\left(\mathbf{x}\right) \right) 
= \sum\limits_{i=1}^dH\left( X_i\right)-H\left( \mathbf{X}\right) -\left[ \sum\limits_{K\in \mathcal{C}_{ch}}I\left( \mathbf{X}_K\right) 
-\sum\limits_{S\in \mathcal{S}_{ch}}\left( \nu _S-1\right)I\left( \mathbf{X}_S\right) \right]  
\]
As we know that  
\[
\sum\limits_{i=1}^dH\left( X_i\right) -H\left( \mathbf{X}\right) =I\left( 
\mathbf{X}\right) 
\]
we obtained formula (\ref{eq:eq4}) and this proves the theorem.
\end{proof}

It is easy to see that the difference $I\left( \mathbf{X}\right) $ do not
depend on the structure of the junction tree . A consequence of Theorem \ref{theo:theo8} is the following remark.

\begin{remark}\label{rem:rem5}
The probability density $\widehat{f}_{V_{\mathcal{CS}}}$ of the form (\ref{eq:eq3}), which is
the best fitting cherry-wine to the real probability density $f_{V}$ 
over all possible truncated R-vines at level $k$ maximizes the following difference 
\[
\sum\limits_{K\in \mathcal{C}_{ch}}I\left( \mathbf{X}_K\right)
-\sum\limits_{S\in \mathcal{S}_{ch}}\left( \nu _S-1\right) I\left( \mathbf{X}%
_S\right).
\]
\end{remark}

Now we make some observation on the corresponding copula densities.

For two variables it was shown (see \cite{CaVi09} and \cite{MaSu08}) that:
\[
I\left( X,Y\right) =\int\limits_{\left[ 0;1\right] ^2}c\left( u,v\right)
\log_2 c\left( u,v\right) dudv
\]
which means that information content is equivalent with ''copula entropy''
concept introduced in \cite{MaSu08}.

Generalizing this for the variables involved in the sets $K$ and $S$ we have:
\[
\begin{array}{rcl}
I\left( \mathbf{X}_K\right)  & = &\int\limits_{\left[ 0;1\right] ^k}c\left( 
\mathbf{u}_{\mathbf{x}_K}\right) \log_2 c\left( \mathbf{u}_{\mathbf{x}_K}\right)
d\mathbf{u}_{\mathbf{x}_K}=-H\left( c_{\mathbf{x}_K}\right) \vspace{3mm} \\
I\left( \mathbf{X}_S\right)  &=&\int\limits_{\left[ 0;1\right]
^{k-1}}c\left( \mathbf{u}_{\mathbf{x}_S}\right) \log_2 c\left( \mathbf{u}_{%
\mathbf{x}_S}\right) d\mathbf{u}_{\mathbf{x}_S}=-H\left( c_{\mathbf{x}%
_S}\right) 
\end{array}
\]
Using the above assertions in Theorem \ref{theo:theo8} the Kullback-Leibler
divergence can be expressed by means of copula entropies:
\[
KL\left( \widehat{f}_{V_{\mathcal{CS}}}\left( \mathbf{x}_{V}\right) ,f_{V}\left( 
\mathbf{x}_{V}\right) \right) =H\left( c_{\mathbf{X}_{V}}\right) +\left[
\sum\limits_{K\in \mathcal{C}_{ch}}H\left( c_{\mathbf{X}_K}\right)
-\sum_{S\in \mathcal{S}_{ch}}\left( \nu _S-1\right) H\left( c_{\mathbf{x}%
_S}\right) \right] \text{ .}
\]

\begin{remark}
The cherry-wine copula density $\widehat{c}_{V_{\mathcal{CS}}}$ associated to
the best fitting cherry-wine probability density  $\widehat{f}_{V_{\mathcal{CS%
}}}$ minimizes the following difference over
all possible truncated R-vines at level $k$:
\end{remark}

\[
\sum\limits_{K\in \mathcal{C}_{ch}}H\left( c_{\mathbf{x}_K}\right)
-\sum\limits_{S\in \mathcal{S}_{ch}}\left( \nu _S-1\right) H\left( c_{%
\mathbf{x}_S}\right) \text{ .}
\]

\section{Conclusion}
\label{sec:6}

In this paper we gave an alternative definition of Regular-vines using the concept of $t$-cherry junction tree. We introduced the cherry-wine structure (a truncated R-vine assigned to a $t$-cherry probability distribution). We gave an algorithm for constructing a truncated R-vine at level $k$ starting from special $k$-th order $t$-cherry junction trees. The problem of inference was also discussed. We developed a method for obtaining a good factorization (which exploits conditional independences) starting from a sample data.
In the last section we discussed some theoretical properties of the best fitting truncated R-vine.
In future we are planning to extend our algorithm to the general case.



\begin{thebibliography}{}
%
%

\bibitem{Aasetal09}
K. Aas, C. Czado, A. Frigessi, and H. Bakken, Pair-copula constructions of multiple dependence, Insur. Math. Econ., 44, 182--198, (2009)

\bibitem{Bau11} A. Bauer, C. Czado and T. Klein, Pair-copula construction for non-Gaussian DAG models, submitted, (2011)

\bibitem{BedCoo01}
T. Bedford and R. Cooke, Probability density decomposition for conditionally dependent random variables modeled by vines, Ann. Math. Artif. Intell., 32, 245--268, (2001)

\bibitem{BedCoo02}
T. Bedford and R. Cooke, Vines -- a new graphical model for dependent random variables, Ann. Stat., 30(4), 1021--1068, (2002)

\bibitem{BreCzaAas10} E.C. Brechmann, C. Czado and K. Aas, Truncated regular vines in high dimensions
with applications to financial data, Submitted for publication, www-m4.ma.tum.de/Papers/Brechmann/vinetrunc.pdf (2010)

\bibitem{BuPre01}
J. Buksz\'{a}r and A. Pr\'{e}kopa, Probability Bounds with Cherry Trees, Mathematics of Operational Research 26, 174--192, (2001)

\bibitem{BuSza02}
J. Buksz\'{a}r and T. Sz\'{a}ntai, Probability Bounds given by hypercherry trees, Optimization Methods and Software, 17, 409--422, (2002)

\bibitem{CaVi09}
R.S. Calsaverini and R. Vicente, 
An information theoretic approach to statistical dependence: Copula information, 
arXiv:0911.4207v1, (2009)

\bibitem{Chow68}
C.K. Chow and C.N. Liu, Approximating Discrete Probability Distribution with Dependence Tree,
IEEE Transactions on Informational Theory, 14, 462-467, (1968)

\bibitem{CoTo91}
T.M. Cover and J.A. Thomas, Elements of Information Theory, Wiley Interscience, New York, (1991)

\bibitem{CowDawLauSpi03}
R.G. Cowell, P.A. Dawid, S.L. Lauritzen and D.J Spiegelhalter,
Probabilistic Networks and Expert Systems (Information Science and Statistics), Springer,
Heidelberg, (2003)

\bibitem{Cza10} C. Czado, Pair-copula constructions of multivariate copulas, In: P. Jaworski, F. Durante, W. Härdle and T. Rychlik (Eds.), Copula Theory and Its Applications, Berlin, Springer, (2010)

\bibitem{Dis10} J.F. Dissmann, Statistical Inference for Regular Vines and Application,
Thesis, Technical University of Munich, Center of Mathematics, (2010)

\bibitem{GenRiv93}
C. Genest and L.P. Rivest, Statistical inference procedures for bivariate Archimedean copulas, J. Am. Stat. Assoc., 88(423), 1034--1043, (1993)

\bibitem{HafAasFri09}
I.H. Haff, K. Aas and A. Frigessi, On the simplified pair-copula construction -- simply useful or too simplistic? Technical Report, Norvegian Computing Center, Oslo (2009)

\bibitem{Ham71}
J.M. Hammersley, Markov fields on finite graphs and lattices, not published, (1971)

\bibitem{HanKurCoo06}
A. Hanea, D. Kurowicka and R. Cooke, Hybrid method for quantifying and analyzing Bayesian belief networks, Qual. Reliab. Eng., 22, 708--729, (2006)

\bibitem{Joe97}
H. Joe, Multivariate Models and Dependence Concepts, Chapman \& Hall, London, (1997)

\bibitem{KarSre01}
D. Karger and N. Srebro, Learning networks: Maximum likelihood bounded
tree-width graphs, SODA-01

\bibitem{KoSza10}  
E. Kov\'{a}cs and T. Sz\'{a}ntai, 
On the approximation of discrete multivariate probability distribution using the
new concept of $t$-cherry junction tree, Lecture Notes in Economics
and Mathematical Systems, 633, Proceedings of the
IFIP/IIASA/GAMM Workshop on Coping with Uncertainty, Robust Solutions, 2008,
IIASA, Laxenburg, 39--56, (2010)

\bibitem{KoSza10a} E. Kov\'{a}cs and T. Sz\'{a}ntai, Multivariate copula expressed by lower dimensional copulas, http://arxiv.org/abs/1009.2898, (2010)

\bibitem{Kul59}
S. Kullback, Information Theory and Statistics, Wiley and Sons, New York, (1959)

\bibitem{Kur10}
D. Kurowicka, Optimal truncation of vines, in: D. Kurowicka and H. Joe (eds) Dependence-Modeling -- Handbook on Vine Copulas, Word Scientific Publishing, Singapore, (2010)

\bibitem{KurCoo02} D. Kurowicka and R. Cooke, The vine copula method for representing high dimensional dependent distributions: Application to continuous belief nets, Proceedings of the 2002 Winter Simulation Conference, 270--278, (2002)

\bibitem{KurCoo06} D. Kurowicka and R. M. Cooke, Uncertainty Analysis with High Dimensional Dependence Modelling, Chichester, John Wiley, (2006)

\bibitem{LauSpi88}
S.L. Lauritzen and D.J. Spiegelhalter, Local Computations with Probabilites
on Graphical Structures and their Application to Expert Systems, J.R.
Statist. Soc. B, 50, 157--227, (1988)

\bibitem{MaSu08}
J. Ma and Z. Sun, Mutual information is copula entropy, 
arXiv: 0808.0845v1, (2008)

\bibitem{MinCza10}
A. Min and C. Czado, Bayesian inference for multivariate copulas using pair-copula constructions, J. Financ. Econom, to appear, preprint available under: http://www-m4.ma.tum.de/Papers/index.html, (2010)

\bibitem{MinCza09}
A. Min and C. Czado, Bayesian model selection for multivariate copulas using pair-copula constructions, Preprint, (2009)

\bibitem{MorCooKur10}
O. Morales-Napoles, R. Cooke and D. Kurowicka, About the number of vines and regular vines on $n$ nodes, Discrete Appl. Math., Submitted  (2010)

\bibitem{Ne99} 
R. Nelsen, An Introduction to Copulas, Springer, New York, (1999)

\bibitem{Pea88}
J. Pearl, Probabilistic reasoning in intelligent systems, CA:
Morgan Kauffman, Palo Alto, (1988)

\bibitem{SzaKov08}  
Sz\'{a}ntai, T. and E. Kov\'{a}cs, 
Hypergraphs as a mean of discovering the dependence structure of a discrete multivariate probability
distribution, \textit{Proc. Conference APplied mathematical programming and 
MODelling (APMOD), 2008}, Bratislava, 27-31 May 2008,
Annals of Operations Research, to appear.

\bibitem{SzaKov10}
T. Sz\'{a}ntai and E. Kov\'{a}cs, 
Application of $t$-cherry junction trees in pattern recognition, 
Broad Research in Artificial Intelligence and Neuroscience (BRAIN), Special Issue on Complexity in Sciences and Artificial Intelligence,
Eds. B. Iantovics, D. Radoiu, M. Marusteri and M. Dehmer, 40--45, (2010)

\bibitem{SzaKov10a} T. Sz\'{a}ntai and E. Kov\'{a}cs, Discovering a junction tree behind a Markov network by a greedy algorithm, http://arxiv.org/abs/1104.2762, (2010)

\bibitem{TarYan84}
R.E.Tarjan and M. Yanakakis, Simple linear-time algorithms to test
chordality of graphs, test acyclity of hypergraphs and selectively reduce
acyclic hypergraphs, SIAM J. Comp.,13, 566--579, (1984)

\bibitem{Yan81}
M.Yanakakis, Computing the minimum Fill in is NP-complete, SIAM Journal
Alg. Disc. Math., 2, 77--79, (1981)
\end{thebibliography}


\end{document}